\documentclass[11pt]{article}
\usepackage[top=2.5cm,bottom=2.5cm,left=2.5cm,right=2.5cm]{geometry}
\usepackage{mathrsfs}
\usepackage{pstricks}
\usepackage{amsmath,amssymb,graphicx,amsthm,tabularx,array}
\usepackage{hyperref}
\usepackage{makecell}

\theoremstyle{plain}
\newtheorem{theorem}{Theorem}[section]

\newtheorem{coro}[theorem]{Corollary}
\newtheorem{lem}[theorem]{Lemma}

\newtheorem{myCli}{Claim}

\newtheorem{conj}[theorem]{Conjecture}

\theoremstyle{definition}

\newtheorem{other}{}

\title{{A  solution to a  conjecture on the signless Laplacian spectral radius for
$t$-color-critical graphs}\thanks{
This work is  supported by NSFC (Nos. 12501493, 12461063, 12471331, 12371349, 12371354). 
The authors   are listed in alphabetical order.
E-mail addresses: mzchen@hainanu.edu.cn (M.-Z. Chen), yaleijin@shnu.edu.cn ($^\dag$Y.-L. Jin, corresponding author), zpengli0626@163.com (P.-L. Zhang),  zhengj@yangtzeu.edu.cn (J. Zheng).}}

\author{Ming-Zhu Chen$^a$, Ya-Lei Jin$^b\dag$, Peng-Li Zhang$^c$,Jian Zheng$^d$\\
{\small $^a$ School of Mathematics and Statistics, Hainan University,
Haikou 570228, P.R. China
} \\
{\small  $^b$ Department of Mathematics, Shanghai Normal University, Shanghai 200234, P.R. China}\\
{\small $^c$ School of Statistics and Data Science, Shanghai University of International} \\
{\small Business and Economics, Shanghai 201620, P.R. China}\\
{\small $^d$ School of  Information and Mathematics, Yangtze University, Jingzhou 434023,  P.R. China}\\
}

\date{}
\begin{document}
\maketitle

\begin{abstract}
An induced matching is a matching that forms an induced subgraph. A graph is
$t$-color-critical if removing some induced matching of size $t$ lowers its
chromatic number, but removing any $t-1$ vertices does not. Let $F$ be a
$t$-color-critical graph with $\chi(F)=r+1$. For sufficiently large $n$,
Simonovits determined the unique edge-extremal $F$-free graph on $n$ vertices.
Recently, Zheng, Li and Li [Linear Algebra Appl.\ 730 (2026) 546--565]
conjectured that, for $t\ge 2$ and $r\ge 3$, the join $K_{t-1}\vee T_{n-t+1,r}$
uniquely maximizes the signless Laplacian spectral radius among all $n$-vertex
$F$-free graphs when $n$ is sufficiently large. In this paper,
we prove this
conjecture.
In contrast to the usual spectral arguments, our proof of this conjecture relies on two techniques of a rather different flavour.
Our first technique is an analogue of Zykov symmetrization for the signless Laplacian matrix.
Our second technique is an induction on $n$, from which we  obtain the lower bound on the smallest entry of the Perron vector of a signless Laplacian spectral extremal graph  rather than a
structural statement.
\end{abstract}

 {\bf MSC classification}\,: 05C35, 05C50

 {\bf Keywords}\,: Signless Laplacian spectral radius;
  $t$-color-critical graph;
  Symmetrization technique;
 Induction.

 \section{Introduction}
A graph $G$ consists of a vertex set $V(G)$ and an edge set $E(G)$, where each
edge is a $2$-element subset of $V(G)$. We write $e(G):=|E(G)|$ for the number
of edges in $G$. A \emph{matching} in $G$ is a set $M\subseteq E(G)$ of pairwise
disjoint edges. For a positive integer $t$, a \emph{$t$-matching} is a matching
of size $t$. The \emph{matching number} of $G$, denoted by $\mu(G)$, is the
maximum cardinality of a matching in $G$. For positive integers
$a_1,\ldots,a_r$, let $K_r(a_1,\ldots,a_r)$ denote the complete $r$-partite
graph with partite sets of sizes $a_1,\ldots,a_r$. The \emph{Tur\'an graph}
$T_{n,r}$ is the complete $r$-partite graph on $n$ vertices whose partite sets
are as balanced as possible.
For two vertex-disjoint graphs $G_1,G_2$, their \emph{union}, denoted by
$G_1\cup G_2$, has vertex set $V(G_1)\cup V(G_2)$ and edge set
$E(G_1)\cup E(G_2)$. The \emph{join} $G_1\vee G_2$ is the graph obtained from
$G_1\cup G_2$ by adding all edges between $V(G_1)$ and $V(G_2)$.
Denote by $K_{n}$ and $C_{n}$  the complete graph and the cycle of order $n$, respectively. Denote by $\overline{G}$  the  complement of  a graph $G.$

A proper vertex coloring of a graph $H$ is an assignment of colors to the
vertices such that adjacent vertices receive distinct colors. The
\emph{chromatic number} $\chi(H)$ is the minimum number of colors in such a
coloring. For a vertex subset $U\subseteq V(H)$, the subgraph induced by $U$,
denoted by $H[U]$, has vertex set $U$ and edge set $\{e\in E(H): e\subseteq U\}$.
An \emph{induced matching} is a matching whose edges form an induced subgraph.

For a graph $F$, a graph $G$ is said to be \emph{$F$-free} if it contains no
copy of $F$ as a subgraph. The \emph{Tur\'an number} of $F$, denoted by
$\mathrm{ex}(n,F)$, is the maximum number of edges among all $F$-free graphs on
$n$ vertices. We write $\mathrm{EX}(n,F)$ for the set of all $n$-vertex
$F$-free graphs attaining this maximum. The classical Tur\'an's
theorem~\cite{Turan41} states that $\mathrm{EX}(n,K_{r+1})=\{T_{n,r}\}$. The
\emph{Tur\'an density} of $F$, denoted by $\pi(F)$, is defined as
\[
\pi(F):=\lim_{n\to\infty}\frac{\mathrm{ex}(n,F)}{\binom{n}{2}}.
\]
The celebrated Erd\H{o}s--Stone--Simonovits theorem~\cite{ES1946,ES1966} asserts
that $\pi(F)=1-\frac{1}{\chi(F)-1}$.

For $t\ge 1$, a graph is \emph{$t$-color-critical} if deleting some induced
matching of size $t$ reduces its chromatic number, while deleting any $t-1$
vertices does not.  It is worth highlighting that this family includes a
wide range of graphs, such as color-critical graphs, the Petersen graph, Kneser graphs and disjoint unions of cliques.  In particular, a $1$-color-critical graph coincides with a
color-critical graph in the usual sense.

In 1974, Simonovits~\cite{Sim1974} determined the unique extremal graph
for $t$-color-critical graphs.

\begin{theorem}[\cite{Sim1974}]\label{thm:Simonovits}
Let $t\ge 1$, $r\ge 2$ and $F$ be a $t$-color-critical graph with
$\chi(F)=r+1$. If $n$ is sufficiently large and $G$ is an $n$-vertex $F$-free
graph, then
\[
e(G)\le e\bigl(K_{t-1}\vee T_{n-t+1,r}\bigr).
\]
Moreover, the equality holds if and only if $G\cong K_{t-1}\vee T_{n-t+1,r}$.
\end{theorem}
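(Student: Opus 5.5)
We prove Theorem~\ref{thm:Simonovits} by the stability method, taking as given the Erd\H{o}s--Stone--Simonovits theorem and the Erd\H{o}s--Simonovits stability theorem. Let $G$ be an $F$-free graph on $n$ vertices with $e(G)\ge e(K_{t-1}\vee T_{n-t+1,r})$.

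The first step is to note that $F$ has chromatic number $r+1$. Hence stability gives a partition $V(G)=V_1\cup\cdots\cup V_r$ with $\sum_i e(V_i)=o(n^2)$, and this partition may be chosen to maximize the number of crossing edges. We then delete, one at a time, vertices of degree below $(1-\frac1r-\varepsilon)n$. Because $e(G)$ exceeds the Tur\'an bound by about $(t-1)n/r$, a standard counting argument shows that only $O(1)$ vertices are ever removed. In the resulting graph each part has size $n/r+o(n)$ and every vertex has at most $\varepsilon n$ neighbours in its own part.

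The second step concerns the set $W$ of vertices with at least $\delta n$ neighbours in \emph{every} part, and also the remaining "bad" edges lying inside parts. This step uses $t$-color-criticality. Let $\{x_1y_1,\dots,x_ty_t\}$ be an induced matching of $F$ whose deletion makes $F$ $r$-colourable. Suppose we can find $t$ pairwise disjoint edges $x_iy_i$ inside the parts, placed so that the corresponding $F$-edges are compatible with the colouring. Then, using common neighbourhoods of linear size in the near-complete $r$-partite structure together with the K\H{o}v\'ari--S\'os--Tur\'an embedding, we embed a copy of $F$. Thus the matching number of the graph of "bad" edges (edges within parts, after adding vertices of $W$ suitably) is at most $t-1$.

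Next, consider the deleted vertices of high degree into all parts. Since deleting $t-1$ vertices from $F$ leaves chromatic number $r+1$, the vertices of $W$ behave like the apex vertices in $K_{t-1}$. This yields $|W|\le t-1$. Moreover, a matching of size $t$ in $G-W$ inside the parts would combine with $W$ to produce $F$, so $G-W$ has no such matching.

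The third step is an edge-counting optimization. The graph $G-W$ is $r$-partite plus a bad-edge graph of matching number at most $t-1-|W|$. Edges inside parts are paid for by the missing crossing edges, because each endpoint of a bad edge has only $o(n)$ inside-neighbours, while its large degree forces it to be adjacent to nearly everything. One then shows $e(G)\le e(K_{t-1}\vee T_{n-t+1,r})$. Equality forces $|W|=t-1$ with $W$ complete to everything, $G-W$ complete $r$-partite, and the parts balanced.

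I expect the main obstacle to be this last comparison, that is, ruling out configurations in which the bad edges within parts form stars centered at fewer than $t-1$ vertices. A star of high degree inside a part essentially behaves like a vertex of $W$. I would handle it by reclassifying star centers of degree at least $\delta n$ into $W$, which leaves a bad graph of bounded maximum degree and matching number at most $t-1-|W|$, hence with $O(1)$ edges. Those edges are then strictly outweighed by the $\Omega(n)$ deficit each such vertex incurs. This gives the inequality, and the strictness of the deficit gives uniqueness.
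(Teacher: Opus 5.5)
The paper does not prove this statement; it quotes it from Simonovits. So your sketch has to stand on its own, and it has a genuine gap at the hardest step: controlling $W$.

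You define $W$ as the vertices with at least $\delta n$ neighbours in every part. You then assert, from $t$-color-criticality alone, that $|W|\le t-1$ and that the bad graph of $G-W$ has matching number at most $t-1-|W|$. To turn $t$ vertices of $W$ (or some vertices of $W$ together with bad edges) into a copy of $F$, they need linear-size \emph{common} neighbourhoods in $r-1$ parts. Having $\delta n$ neighbours in each part gives nothing of the sort.

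Here is a concrete counterexample.
\begin{itemize}
\item Take $r=3$, $t=2$ and $F=(2K_2\cup\overline{K}_{s-4})\vee T_{2s,2}$.
\item Start from $T_{3k,3}$ with parts $V_1$, $V_2=A\cup A'$, $V_3=B\cup B'$, where $A,A'$ and $B,B'$ are halves.
\item Add non-adjacent vertices $w,w'$ with $N(w)=V_1\cup A\cup B$ and $N(w')=V_1\cup A'\cup B'$.
\end{itemize}
The graph minus $\{w,w'\}$ is $3$-partite, and $\chi(F-x)=4$ for every $x$, so any copy of $F$ uses both $w$ and $w'$. Their preimages must meet both matching edges of $F$; otherwise a $K_4$ would land in a $3$-partite graph. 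Hence the $K_{s,s}$ on the other two classes of $F$ must lie in $N(w)\cap N(w')=V_1$, which is independent. So this graph is $F$-free, has minimum degree $\frac23(n-2)$, and has $|W|=2=t$.

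Thus $|W|\le t-1$ does not follow from $F$-freeness plus the degree conditions. It can only come from extremality: you must first prove that every vertex of $W$ is adjacent to all but $o(n)$ (eventually all) other vertices. That near-apex property is the real content of the theorem, and your outline assumes it.

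In the paper's spectral analogue (the proof of Lemma~\ref{pro-lem}), this is exactly what the symmetrization machinery supplies. Lemma~\ref{lem:4.4} bounds $|\beta(u)|$, and Lemmas~\ref{lem:4.5} and~\ref{lem:4.6} produce huge classes of pairwise symmetric vertices. Only then do Claims~1 and~2 show that each vertex of $W$ is complete to these classes and that $|W|=t-1$. An edge-counting version needs a cloning or replacement argument of the same kind, or some other genuine argument.

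Several secondary steps are also wrong as stated.
\begin{itemize}
\item Erd\H{o}s--Stone only gives $\mathrm{ex}(n',F)-e(T_{n',r})=o(n'^2)$. So your deletion argument bounds the number of removed vertices by $o(n)$, not $O(1)$. The usual repair is to prove the bound for the cleaned graph first, then note that removing even one low-degree vertex costs too much.
\item The claim that after cleaning every vertex has at most $\varepsilon n$ neighbours in its own part is false for the would-be $K_{t-1}$ vertices. It also contradicts your own later use of $W$.
\item Moving centres of bad degree at least $\delta n$ into $W$ leaves bad degrees below $\delta n$, not bounded ones. The leftover bad graph can therefore have $\Theta(\delta n)$ edges rather than $O(1)$. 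This is harmless only once the near-apex property is known: then $|W|=t-1$ forbids every bad edge in $G-W$, while $|W|\le t-2$ leaves a linear deficit.
\item The bad-edge matching bound should be stated per part, since a $t$-matching spread over several parts need not produce $F$.
\end{itemize}
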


The adjacency matrix of a graph $G$ is defined as $A(G)=(a_{ij})_{n\times n}$,
where $a_{ij}=1$ if $ij\in E(G)$ and $a_{ij}=0$ otherwise. The \emph{adjacency
spectral radius} $\lambda(G)$ is the largest eigenvalue of $A(G)$. Determining
$\lambda(G)$ for $F$-free graphs is referred to as a \emph{spectral
Tur\'an-type problem}. Accordingly, we define $\mathrm{ex}_{\lambda}(n,F)$ as
the maximum adjacency spectral radius among all $n$-vertex $F$-free graphs, and
$\mathrm{EX}_{\lambda}(n,F)$ as the set of $n$-vertex $F$-free graphs attaining
this maximum. Guiduli \cite{Gui1996} and Nikiforov~\cite{Niki2007laa2}
independently established the spectral version of Tur\'an's theorem. In 2009,
Nikiforov~\cite{N2009} extended this result and proved that
$\mathrm{EX}_{\lambda}(n,F)=\{T_{n,r}\}$ for any color-critical graph $F$ with
$\chi(F)=r+1$. For further details, we refer the reader to the comprehensive
survey~\cite{LiLF}.

Fang et al.~\cite{Fang2026} and Zhang \cite{Zhang2026}    independently established the
adjacency spectral version of Theorem~\ref{thm:Simonovits}.

\begin{theorem}[\cite{Fang2026,Zhang2026}]\label{thm:adjacency}
Let $t\ge 1$, $r\ge 2$ and $F$ be a $t$-color-critical graph with
$\chi(F)=r+1$. If $n$ is sufficiently large, then
\[
\mathrm{EX}_{\lambda}(n,F)=\bigl\{K_{t-1}\vee T_{n-t+1,r}\bigr\}.
\]
\end{theorem}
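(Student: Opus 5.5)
We would follow the standard stability-plus-refinement route for spectral Turán problems, using Theorem~\ref{thm:Simonovits} as the edge-extremal input. Let $G$ be an $n$-vertex $F$-free graph maximizing $\lambda$, with unit Perron vector $x$ and $\lambda=\lambda(G)$. Since $K_{t-1}\vee T_{n-t+1,r}$ is $F$-free (deleting $t-1$ vertices of $F$ never lowers its chromatic number to $r$), we get $\lambda\ge \lambda(K_{t-1}\vee T_{n-t+1,r})\ge (1-\frac1r)n+\Omega(1)$. Since $F\subseteq K_{r+1}(s,\dots,s)$ for $s=|V(F)|$, the graph $G$ is $K_{r+1}(s,\dots,s)$-free. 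By Nikiforov's spectral Erdős--Stone--Simonovits stability, $e(G)\ge(1-\frac1r)\frac{n^2}{2}-o(n^2)$, and $G$ admits an $r$-partition $V_1,\dots,V_r$ with $\sum_i e(V_i)=o(n^2)$ and parts of size $\frac nr+o(n)$.

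Next comes the cleanup. Let $W$ be the set of vertices with $d(v)\le(1-\frac1r-\varepsilon)n$. An eigenvalue comparison (delete such a vertex and use that $\lambda$ drops by at most about $x_v^2\lambda$-type terms), or simply the edge count, shows $|W|=o(n)$. Using $\lambda x_v=\sum_{u\sim v}x_u$, we then show that every vertex outside $W$ has $x_v\ge(1-\delta)\max x$. Let $L$ be the set of vertices outside $W$ having $\Omega(n)$ neighbours in their own part. If $L$ contained $t$ vertices, we could embed $F$: choose its critical induced $t$-matching with endpoints in $V_i$, and embed the remaining $r$-colourable part in a large common neighbourhood, which exists by the minimum degree bound. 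So $|L|\le t-1$. A standard counting argument then shows $L$ has exactly $t-1$ vertices, each almost complete to $V\setminus L$; otherwise $\lambda$ would fall below $\lambda(K_{t-1}\vee T_{n-t+1,r})$. We show this by estimating $\lambda\sum x$ against the edge structure.

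For the final step, we show $W=\emptyset$. A vertex $w\in W$ can be rewired by deleting its edges and joining it to all of $L$ and to all parts but one. This creates no copy of $F$, because $w$ behaves like a vertex of the Turán part, and it strictly increases $x^{T}Ax$ since $x_w>0$ and the $x_u$ are nearly uniform. This contradicts maximality. After that, $G-L$ has no edge inside any $V_i$, since an inside edge together with $L$ would give $t$ disjoint "bad" edges forming an induced matching, and $F$ would embed. Hence $G\subseteq K_{t-1}\vee K_r(|V_1|,\dots,|V_r|)$, and maximality forces equality. Balancedness of the parts follows from the fact that moving a vertex from a larger part to a smaller part increases $\lambda$, which we check through the characteristic equation of the quotient matrix.

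The main obstacle is the embedding step that bounds $|L|\le t-1$ and forbids edges inside parts. There we must show that $t$ vertices with linear in-part degree, or one in-part edge combined with the $t-1$ universal vertices, produce $F$. The difficulty is arranging the induced $t$-matching so that all the remaining vertices of $F$ land in common neighbourhoods with the correct colour classes; this is handled by a greedy embedding into a blown-up $K_{r}(s,\dots,s)$ inside the dense common neighbourhoods.
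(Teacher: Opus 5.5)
The paper does not prove this statement (it is quoted from Fang et al.\ and Zhang), so your sketch has to stand on its own. As written it has genuine gaps: two intermediate claims are false, and the decisive step is missing.

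First, the embedding step giving $|L|\le t-1$ does not follow from what you know at that point, and as a structural statement it is false. Take $t=2$ and start from $T_{n-2,r}$ with parts $V_1,\dots,V_r$. Split $V_1=A_1\cup B_1$ and $V_2=A_2\cup B_2$ into halves, and add non-adjacent vertices $\ell_1,\ell_2$ with $N(\ell_1)=A_1\cup A_2\cup V_3\cup\dots\cup V_r$ and $N(\ell_2)=B_1\cup B_2\cup V_3\cup\dots\cup V_r$. Both have degree about $(1-\frac1r)n$, and wherever they are placed in the partition they have at least $\frac{n}{2r}$ neighbours in their own part, so $|L|\ge 2=t$. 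Yet there is no copy of $F=(2K_2\cup\overline{K}_{s-4})\vee T_{s(r-1),r-1}$. Every $K_{r+1}$ contains exactly one $\ell_k$, which forces the two matching edges through $\ell_1$ and $\ell_2$. The other $r-1$ classes are then forced into $N(\ell_1)\cap N(\ell_2)=V_3\cup\dots\cup V_r$, which contains no $K_{r-1}$. Your ``large common neighbourhood'' need not exist: each vertex of $L$ may miss about $\frac nr$ vertices, and two of them can jointly miss whole parts.

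Second, the uniformity claim fails for the target graph itself. In $K_{t-1}\vee T_{n-t+1,r}$ no vertex lies in $W$, but the eigen-equation at a clique vertex gives $x_v/\max x\to 1-\frac1r$ for every $v$ in the Tur\'an part. Even restricted to $V(G)\setminus(W\cup L)$, with $L$ defined by in-part degree at least $\eta n$, the eigen-equation controls $x_v$ only up to a factor $1\pm O(\varepsilon+\eta)$, since a vertex just outside $W$ may miss $\varepsilon n$ cross edges. That is the same order as the gain you expect when rewiring $w\in W$, so that step is not justified either.

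Third, and most importantly, the step ``$|L|=t-1$ and every $L$-vertex is almost complete'' is the real content of the theorem, and estimating $\lambda\sum_v x_v=\sum_v d(v)x_v$ cannot deliver it. One near-universal vertex more or less changes $\lambda$ only by a constant, since $\lambda(K_{t-1}\vee T_{n-t+1,r})=(1-\frac1r)n+(t-1)\frac{2r-1}{r(r-1)}+o(1)$. To see such a change through that identity you would need the Perron entries to relative precision of order $\frac1n$. At that stage you only know $|W|=o(n)$, that $o(n^2)$ cross edges may be missing, and that typical vertices may carry up to $\eta n$ in-part edges. Moreover, you eliminate $W$ only afterwards, using the conclusion about $L$, so the order of the argument is circular.

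What is missing is a reduction of all exceptional structure to $O(1)$ vertices and $O(1)$ edges \emph{before} any constant-order comparison. This is how the paper proves the $Q$-analogue, Lemma~\ref{pro-lem}, borrowing the twin lemmas from Zhang's adjacency proof:
\begin{itemize}
\item Low-degree vertices are excluded at the outset by a minimum-degree bound (Lemma~\ref{mindu}); in the adjacency setting you need a separate argument for this.
\item The high in-part-degree set (the paper's $W$, your $L$) is bounded only by a constant $M$ (Lemma~\ref{lem:4.1}), via an $(r+1)$-partite counting argument that works precisely because the set is assumed large.
\item The defects $|\alpha(u)|,|\beta(u)|$ of typical vertices are bounded (Lemma~\ref{lem:4.4}).
\item Twin extraction (Lemmas~\ref{lem:4.5} and~\ref{lem:4.6}) together with symmetrization (Lemma~\ref{lem:2.2}) shows that all but $O(1)$ vertices of each part have neighbourhood exactly $V(G)\setminus\overline{V}_i$ (Claim~\ref{1}).
\end{itemize}
Only then does $|W|\le t-1$ follow by embedding, and $|W|\ge t-1$ follow from a $\Theta(1)$ gain against an $O(\frac1n)$ loss (Claim~\ref{2}).

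For the adjacency matrix the symmetrization criterion is simply $x_u\ge x_v$. For non-adjacent $u,v$, replacing $N(v)$ by $N(u)$ changes $x^{T}Ax$ by $2\lambda x_v(x_u-x_v)$. This is also the right tool for removing low-degree vertices in place of your uniformity-based rewiring.
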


The \emph{signless Laplacian matrix} of a graph $G$ is given by
$Q(G)=D(G)+A(G)$, where $D(G)$ is the degree diagonal matrix and $A(G)$ is the
adjacency matrix. The largest eigenvalue of $Q(G)$, denoted by $q(G)$, is
referred to as the \emph{signless Laplacian spectral radius} or the \emph{$Q$-index}
of $G$. Let $\mathrm{ex}_{q}(n,F)$ be the maximum $Q$-index among all
$F$-free graphs on $n$ vertices, and let $\mathrm{EX}_{q}(n,F)$ denote the
family of extremal graphs attaining this value.

In 2013, Freitas, Nikiforov and Patuzzi~\cite{FNP2013} initiated the
Tur\'an-type extremal problem for the signless Laplacian spectral radius:
determine the maximum possible $Q$-index of an $F$-free graph of order $n$.
Over the past decade, this problem has attracted considerable attention.  
 He, Jin and Zhang~\cite{HJZ2013} proved that every complete bipartite graph $K_{a,n-a}$ ($1\le a<n$) has the maximal signless Laplacian spectral radius for triangle-free graphs of order $n.$ They also proved that, for $r\geq 3$, the Tur\'{a}n graph $T_{n,r}$ is the unique signless Laplacian spectral extremal graph among $K_{r+1}$-free graphs of order $n$. Note that  $K_{r+1}$ is a color-critical graph of chromatic number $r+1.$ Later,
for color-critical graphs $F$ with $\chi(F)=r+1\geq4$, Zheng, Li and Li~\cite{ZLL2025} proved that $T_{n,r}$ is the
unique signless Laplacian spectral extremal graph for sufficiently large $n$,
which implies that $\mathrm{EX}_{q}(n,F)=\mathrm{EX}(n,F).$
For more related results, readers are referred to references~\cite{AFNP2016,CJZ2025,CLZ2020,CLZ2024,CWZ2022,CZ2021,LMX2022,NY2,
WZ2023,
Y2014,ZHG2021,ZLL2025}.
A recent theorem of Chen, Jin, Zhang and Zhang~\cite{CJZZ2026} provides a
more general connection between the Tur\'{a}n-type extremal problem and the signless Laplacian spectral Tur\'{a}n-type extremal problem: if $\mathrm{ex}(n,F)=e(T_{n,r})+O(1)$
with $\chi(F)=r+1\ge 4$, then $\mathrm{EX}_{q}(n,F)\subseteq \mathrm{EX}(n,F)$ for sufficiently large
$n$. For fixed $t\ge 2$,  Theorem~\ref{thm:Simonovits} together with the identity
\[
e(K_{t-1}\vee T_{n-t+1,r})-e(T_{n,r})=\frac{t-1}{r}\,n+O(1)
\]
shows that $t$-color-critical graphs lie outside the scope of this
bounded-error result. Nevertheless, Zheng, Li and Li~\cite{ZLL2025}
conjectured that $\mathrm{EX}_{q}(n,F)=\mathrm{EX}(n,F)$ for every $t$-color-critical graph $F$ and all sufficiently large $n$.


\begin{conj}[\cite{ZLL2025}]\label{conj:ZLL}
Let $t\ge 2$, $r\ge 3$ and  $F$ be a $t$-color-critical graph with
$\chi(F)=r+1$. If $n$ is sufficiently large and $G$ is an $n$-vertex $F$-free
graph, then
\[
q(G)\le q\bigl(K_{t-1}\vee T_{n-t+1,r}\bigr).
\]
Moreover, the equality holds if and only if $G\cong K_{t-1}\vee T_{n-t+1,r}$.
\end{conj}

Recently, Zhao, You and Zeng~\cite{ZYZ2026} proved Conjecture~\ref{conj:ZLL}  for
 $F=tK_{r+1}$ and   $t\ge 2$, $r\ge 3$.
In this paper, we confirms this
conjecture as follows.
\begin{theorem}\label{thm:main}
Let $t\ge 1$, $r\ge 3$ and  $F$ be a $t$-color-critical graph with
$\chi(F)=r+1$. If $n$ is sufficiently large and $G$ is an $n$-vertex $F$-free
graph, then
\[
q(G)\le q\bigl(K_{t-1}\vee T_{n-t+1,r}\bigr).
\]
Moreover, the equality holds if and only if $G\cong K_{t-1}\vee T_{n-t+1,r}$.
\end{theorem}

If $F$ is an $s$-vertex $t$-color-critical graph with $\chi(F)=r+1$, then
$F\subseteq \bigl(tK_2\cup K_{s-2t}\bigr)\vee T_{s(r-1),\,r-1}$. In fact, we prove the following stronger result.
\begin{theorem}\label{thm:main1}
For integers $1\le t\le s/2$ and $r\ge 3$, let
\[
F=\bigl(tK_2\cup K_{s-2t}\bigr)\vee T_{s(r-1),\,r-1}.
\]
Then, for sufficiently large $n$, $K_{t-1}\vee T_{n-t+1,r}$ is the unique graph
in $\mathrm{EX}_{q}(n,F)$.
\end{theorem}
Obviously, by Theorem~\ref{thm:main1}, Theorem~\ref{thm:main} holds.

For an integer $k\ge 2$,
recall that the $k$-th power $G^{k}$ of a graph $G$ joins two distinct vertices whenever
their distance in $G$ is at most $k$.
The following result is an immediate consequence of Theorem~\ref{thm:main1}.

\begin{coro}\label{cor:power}
Assume that $s=p(k+1)+h$, where $k,p\ge 2$ and $1\le h\le k$ are integers.
Let
\[
t=h-\Bigl\lfloor\frac{h-1}{p}\Bigr\rfloor p,
\qquad
r=k+\Bigl\lceil\frac{h}{p}\Bigr\rceil.
\]
Then, for sufficiently large $n$, $K_{t-1}\vee T_{n-t+1,r}$ is the unique graph
in $\mathrm{EX}_{q}(n,C_s^{k})$.
\end{coro}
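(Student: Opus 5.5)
The corollary follows from Theorem~\ref{thm:main} (equivalently Theorem~\ref{thm:main1}). It suffices to show that $C_s^k$, with $s=p(k+1)+h$, is $t$-color-critical with $\chi(C_s^k)=r+1$, where $t$ and $r$ are as in the statement. Then $r\ge k+1\ge 3$, and Theorem~\ref{thm:main} gives that $K_{t-1}\vee T_{n-t+1,r}$ is the unique graph in $\mathrm{EX}_q(n,C_s^k)$ for large $n$. I expect this reduction to be exactly the content of known results on powers of cycles; the authors probably cite the analogous adjacency-spectral papers (Fang et al., Zhang), which handle $C_s^k$ with the same parameters. So the plan has three parts.

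\textbf{Chromatic number.} Every $k+1$ cyclically consecutive vertices of $C_s^k$ form a clique. Hence each colour class is a set of vertices with pairwise cyclic distance at least $k+1$, and so has size at most $\lfloor s/(k+1)\rfloor=p$, using $h\le k$. It follows that
\[
\chi(C_s^k)\ge \lceil s/p\rceil = k+1+\lceil h/p\rceil = r+1 .
\]
For the upper bound, write $s=(r+1)q'+\rho$ and colour the cycle in blocks so that every colour class has pairwise gaps at least $k+1$. This is the standard computation of Prowse--Woodall, $\chi(C_s^k)=k+1+\lceil h/p\rceil$, and I would either cite it or give the explicit block colouring. Since $p\ge 2$ and $h\le k$, we also get $r\le k+\lceil k/2\rceil$.

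\textbf{Deleting $t-1$ vertices keeps $\chi=r+1$.} After removing any $t-1$ vertices, some colour class still needs the same count. The remaining graph is a union of at most $t-1$ paths of $k$-th powers, but each vertex still lies in $k$-cliques along the arcs. The cleaner argument is by counting on arcs: an $r$-colouring would need colour classes of total size $s-t+1$, and each class of $k$-separated vertices intersects the cyclic arrangement in at most $p$ vertices, up to boundary effects at deletion points. I would bound how much each deleted vertex can help. The goal is that one deletion raises the capacity by at most one vertex per colour within a block of $p$, so that $r$ colours still cover too few vertices unless $t$ deletions are made; this is where the formula for $t$ enters.

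\textbf{An induced $t$-matching whose removal drops $\chi$.} Here I would give an explicit $r$-colouring of $C_s^k$ in which exactly $t$ pairs of adjacent vertices share a colour. These pairs must be at cyclic distance exactly $k$, and pairwise far apart so that together they form an induced matching. Since $s=p(k+1)+h$ and $h=\lfloor (h-1)/p\rfloor p+t$, use $p$ rounds of the $r$ colours with $r=k+1+\lfloor (h-1)/p\rfloor+1$, which absorbs $\lfloor (h-1)/p\rfloor p$ extra vertices cleanly. The remaining $t$ extra vertices each force one conflict at distance $k$, placed in distinct rounds, which is possible because $t\le p$; these $t$ conflicting edges form the induced matching. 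The main obstacle is the previous step, the lower bound showing that fewer than $t$ vertex deletions cannot drop the chromatic number. I would attempt it with the independence-number bound: after deleting a set $D$ with $|D|=t-1$, show that every independent set of $C_s^k-D$ still has size at most $p$, except that it can gain at most one vertex per deleted vertex. Then $r\cdot p$ plus the gain is less than $s-t+1$.
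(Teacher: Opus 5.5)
Your reduction is sound and is in substance the paper's, made self-contained. The paper does not check $t$-color-criticality. Instead it cites from Zhang's work two facts: $C_s^k\subseteq F:=(tK_2\cup\overline{K}_{s-2t})\vee T_{s(r-1),r-1}$, and $K_{t-1}\vee T_{n-t+1,r}$ is $C_s^k$-free. It then concludes via $\mathrm{ex}_q(n,C_s^k)\le \mathrm{ex}_q(n,F)$ and Theorem~\ref{thm:main1}. These are exactly your third and second steps, since a copy of $C_s^k$ in $K_{t-1}\vee T_{n-t+1,r}$ is the same as a set $D$ of at most $t-1$ vertices with $C_s^k-D$ $r$-colourable.

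However, you misdiagnose the step you call the main obstacle, and the accounting you sketch would not close. $C_s^k-D$ is an induced subgraph, so its independent sets are independent in $C_s^k$. There are no boundary effects and no gain, so $\alpha(C_s^k-D)\le p$. The formula for $t$ enters only through the identity $s=rp+t$, which follows from $r=k+1+\lfloor (h-1)/p\rfloor$ and $h=p\lfloor (h-1)/p\rfloor+t$. Hence for $|D|\le t-1$
\[
\chi(C_s^k-D)\ \ge\ \Bigl\lceil \frac{s-t+1}{p}\Bigr\rceil=\Bigl\lceil \frac{rp+1}{p}\Bigr\rceil=r+1 ,
\]
which gives both $\chi(C_s^k)\ge r+1$ and the vertex-deletion condition at once. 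There is no slack. If each deletion could enlarge a colour class by one vertex, the capacity $rp+t-1$ would be at least $s-t+1=rp+1$ for $t\ge 2$, and nothing would be excluded.

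In the third step, the number of colours is $r=k+1+\lfloor (h-1)/p\rfloor$ (you wrote an extra $+1$). Also, the conflicting pairs need not be at distance $k$. A clean choice: traverse the cycle in $p$ rounds coloured $1,2,\dots,r$, and in $t\le p$ of them insert a second vertex of colour $r$ immediately after the one of colour $r$. Apart from the $t$ inserted pairs, any two vertices of the same colour are at cyclic distance at least $r\ge k+1$. The same holds for any two vertices from distinct pairs. So the $t$ consecutive pairs form an induced matching whose deletion leaves a properly $r$-coloured graph.

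All these pairs lie in colour class $r$, of size $p+t\le s$, and the remaining classes have size $p\le s$. This yields $C_s^k\subseteq F$ directly, so you can invoke Theorem~\ref{thm:main1} exactly as the paper does. Recolouring one endpoint of each pair with a new colour gives $\chi(C_s^k)\le r+1$, so the Prowse--Woodall citation is unnecessary.
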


\begin{proof}
As shown in the proof of \cite[Theorem~1.7]{Zhang2026},
\[
C_s^{k}\subseteq \bigl(tK_2\cup K_{s-2t}\bigr)\vee T_{s(r-1),\,r-1},
\]
and $K_{t-1}\vee T_{n-t+1,r}$ is $C_s^{k}$-free. Since
$r=k+\bigl\lceil h/p\bigr\rceil\ge 3$, the desired result follows from
Theorem~\ref{thm:main}.
\end{proof}


\noindent $\mathbf{Our ~ approach.}$ In the proof of Theorem~\ref{thm:main1}, we use two important tools.
Our first tool is  an analogue of Zykov symmetrization for the signless Laplacian (see Lemma~\ref{lem:2.2}).  It gives a Perron vector criterion: whenever the criterion holds, deleting all edges at a vertex $v$ and reconnecting $v$ to the neighbours of a non-adjacent vertex $u$ strictly increases $q$. Since the increase is strict,  the signless Laplacian spectral extremal   graph admits no symmetrizable pair. Thus the problem of maximizing the $Q$-index is transformed into a combinatorial structural problem for graphs.  Since the signless Laplacian matrix involves the vertex degrees, the resulting criterion necessarily mixes the adjacency structure, the entries of the Perron vector, and the
$Q$-index itself. In contrast, no such analogue exists for the adjacency matrix or for edge-based symmetrization.


Our second tool is an induction on $n$,  based on Lemmas~\ref{min-comp} and~\ref{sdf}. The induction yields a  spectral statement rather than a structural one; specifically, we obtain the following lower bound on the smallest entry of the Perron vector of a signless Laplacian spectral extremal graph:
\begin{equation}\label{ast1}
x_{\min}^{2} \ge \frac1n\Bigl(1-\frac{1}{\log\log n}\Bigr).
\end{equation}
Lemma~\ref{min-comp}
asserts that (\ref{ast1}) holds for infinitely many orders, and Lemma~\ref{pro-lem}
uses (\ref{ast1}) to determine the exact structure of the extremal graph. Hence Theorem~\ref{thm:main1}   holds for  some sufficiently large integer $n_0$, which starts the induction. Now assume  Theorem~\ref{thm:main1}  holds for $n-1$. Then
$\mathrm{ex}_{q}(n-1,F)=q\bigl(K_{t-1}\vee T_{n-t,r}\bigr)$, which gives an upper bound on the ratio $\mathrm{ex}_{q}(n-1,F)/\mathrm{ex}_{q}(n,F)$. At the same time, applying
Lemma~\ref{sdf} to a vertex $u$ with minimum Perron entry in a signless Laplacian spectral extremal graph
$G\in\mathrm{EX}_{q}(n,F)$ gives a lower bound on the same ratio. The two bounds
agree to within an error of $O(n^{-2})$, which re-establishes (\ref{ast1}) at order $n$.
Lemma~\ref{pro-lem} then yields $G\cong K_{t-1}\vee T_{n-t+1,r}$, completing the induction.

\paragraph{Organization.}
 The rest of the paper is structured as follows. In Section $2$, we collect the necessary preliminaries. Section $3$ is devoted to the proof of Lemma~\ref{pro-lem}. In Section $4$, we prove Theorem~\ref{thm:main1}.

\section{Preliminaries}

Let $G$ be a graph.
For a vertex $v\in V(G)$, the \emph{neighborhood} $N_G(v)$ denotes the set of vertices adjacent to $v$,
and the \emph{degree} $d_G(v):=|N_G(v)|$ is the number of such vertices.
Let $\delta(G)$ denote the \emph{minimum degree} of $G$.
For a vertex subset $U\subseteq V(G)$,
we write $N_U(v):=N_G(v)\cap U$ for the set of neighbors of $v$ in $U$,
$d_U(v):=|N_U(v)|$ for its cardinality.
For $V_1,V_2\subseteq V(G)$, let $E_G(V_1,V_2):=\{uv\in E(G):u\in V_1,\ v\in V_2\}$
denote the set of edges between $V_1$ and $V_2$,
and write $e_G(V_1,V_2):=|E_G(V_1,V_2)|$.
When the graph $G$ is clear from the context, we abbreviate $e_G(V_1,V_2)$ to $e(V_1,V_2)$.   We say that distinct vertices
$v_1 , v_2 , ..., v_\tau$ of $G$ are symmetric if there are no edges among ${v_1,v_2,...,v_\tau}$ and $N_{G}(v_1)=N_{G}(v_i)$ for all $1\leq i\leq \tau.$

\begin{lem}[\cite{CFTZ2022}]\label{dx}
Let $V_{1},\ldots, V_{n}$ be $n$ finite sets. Then $|V_{1}\cap\cdots\cap V_{n}|\geq\sum_{i=1}^{n}
|V_{i}|-(n-1)|\cup_{i=1}^{n}V_{i}|$.
\end{lem}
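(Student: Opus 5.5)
We prove the inequality $|V_1\cap\cdots\cap V_n|\ge\sum_{i=1}^n|V_i|-(n-1)|\bigcup_{i=1}^n V_i|$ by complementation inside the union, followed by the union bound. Set $U=\bigcup_{i=1}^n V_i$ and $\overline{V_i}=U\setminus V_i$ for each $i$. All sets are finite and contained in $U$. By De Morgan's law, the complement of $V_1\cap\cdots\cap V_n$ within $U$ is exactly $\overline{V_1}\cup\cdots\cup\overline{V_n}$.

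Apply subadditivity of cardinality to this union:
\[
|U|-|V_1\cap\cdots\cap V_n| = \Bigl|\bigcup_{i=1}^n\overline{V_i}\Bigr| \le \sum_{i=1}^n|\overline{V_i}| = \sum_{i=1}^n\bigl(|U|-|V_i|\bigr) = n|U|-\sum_{i=1}^n|V_i|.
\]
Rearranging gives $|V_1\cap\cdots\cap V_n|\ge\sum_{i=1}^n|V_i|-(n-1)|U|$, which is the claimed bound.

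An alternative route is induction on $n$ using $|A\cap B|=|A|+|B|-|A\cup B|\ge|A|+|B|-|U|$ for $A,B\subseteq U$. The complement argument above is more direct, and no step presents a real obstacle. The only point needing care is to take complements relative to the union $U$, not to some ambient set, so that the coefficient of $|U|$ comes out exactly as $n-1$.
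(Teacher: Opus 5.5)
The paper does not prove this lemma; it quotes it from the literature, so there is no in-paper argument to compare with. Your proof is correct and complete, including the degenerate case $n=1$: you complement inside $U=\bigcup_{i}V_i$, apply De Morgan, and then use subadditivity.

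One small correction to your closing remark. Nothing goes wrong if you complement inside an arbitrary finite set $X\supseteq U$ instead. The same computation gives $|V_1\cap\cdots\cap V_n|\ge\sum_{i=1}^n|V_i|-(n-1)|X|$, still with coefficient exactly $n-1$; it is only weaker because $|X|\ge|U|$. That weaker form is in fact the one the paper uses: with $X=\overline{V}_{k+1}$ in the proof of Lemma~\ref{lem:4.1}, and with $X=\overline{V}_j$ in Lemma~\ref{lem:4.2}.
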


In \cite{ZLS2025}, Zheng, Li and Su established an asymptotic value of $\mathrm{ex}_{q}(n,F)$ for any graph $F$. In particular,
for a graph $F$ with $\chi(F)\geq 3$, the value $\mathrm{ex}_{q}(n,F)$ admits the following  expression.

\begin{theorem}[\cite{ZLS2025}]\label{ess}
If  $F$ is a graph with chromatic number $\chi(F)\geq 3$, then
$$\mathrm{ex}_q (n,F)=(2\pi(F)+o(1))n.$$
\end{theorem}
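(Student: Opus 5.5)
The plan is to prove matching lower and upper bounds on $\mathrm{ex}_q(n,F)$. Throughout, $\chi(F)=r+1\ge 3$, so by the Erd\H{o}s--Stone--Simonovits theorem $\pi(F)=1-\frac1r$.

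\emph{Lower bound.} The Tur\'an graph $T_{n,r}$ is $r$-colourable, so it is $F$-free. Its minimum degree is at least $n-\lceil n/r\rceil$. Since $q(G)\ge 2\delta(G)$ for any graph $G$ (take the all-ones test vector in the Rayleigh quotient $x^{\top}Qx=\sum_{uv\in E}(x_u+x_v)^2$), we get
\[
\mathrm{ex}_q(n,F)\ge q(T_{n,r})\ge 2\bigl(n-\lceil n/r\rceil\bigr)=(2\pi(F)-o(1))n .
\]

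\emph{Upper bound.} Fix $\varepsilon>0$ and let $G$ be $F$-free with $q(G)\ge(2\pi(F)+\varepsilon)n$; the goal is a contradiction for large $n$. The target is a dense subgraph $H\subseteq G$ with $|V(H)|=m\ge c(\varepsilon)n$ and $\delta(H)\ge(\pi(F)+\varepsilon/8)m$. Such an $H$ has $e(H)\ge(\pi(F)+\varepsilon/8)m^2/2$, so the Erd\H{o}s--Stone--Simonovits theorem puts a copy of $F$ in $H$, hence in $G$, which is the contradiction.

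To find $H$, I would run a vertex-deletion process. Set $G_0=G$. While the current graph $G_i$ on $n-i$ vertices has a vertex of degree less than $(\pi(F)+\varepsilon/8)(n-i)$, delete it. The potential to track is $q(G_i)-(2\pi(F)+\varepsilon/2)(n-i)$, and the aim is to show it stays nonnegative. This needs a bound on how much $q$ drops when a low-degree vertex $v$ is removed. Let $x$ be the unit Perron vector of $Q(G_i)$ and restrict it to $V(G_i)\setminus\{v\}$. The edge-sum form of the Rayleigh quotient gives
\[
q(G_i-v)\;\ge\;\frac{q(G_i)-\sum_{u\sim v}(x_u+x_v)^2}{1-x_v^2}.
\]
The eigen-equation $(q-d_v)x_v=\sum_{u\sim v}x_u$ together with $d_v<(\pi+\varepsilon/8)(n-i)$ and $q\ge(2\pi+\varepsilon/2)(n-i)$ gives $q-d_v\ge(\pi+3\varepsilon/8)(n-i)$. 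Hence $x_v$ is small relative to its neighbours, and the loss $\sum_{u\sim v}(x_u+x_v)^2$ should be at most about $2\pi+\varepsilon/4$ in the normalised sense. That is just enough for the potential not to decrease.

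Once the potential is known to stay nonnegative, the process must stop while $n-i$ is still at least a constant fraction of $n$, because $q(G_i)\le 2(n-i)$ always. The surviving graph is the required $H$.

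\emph{Main obstacle.} The step I expect to be hardest is this per-deletion estimate. The difficulty is that $Q$ carries the degree term, so high-degree vertices (a star already has $q=n$) can inflate $q$ without any density, and the Perron mass may concentrate on a few vertices. To control this I would bound $\sum_{u\sim v}x_u^2$ using $q\,x_u\le d_u x_u+\sum_{w\sim u}x_w$ and Cauchy--Schwarz.

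If that fails, the fallback is to first delete the $O(1/\varepsilon)$ vertices of Perron entry at least $\varepsilon^2$. That step costs only $O(1)\cdot n$-independent constants in the normalised potential. It would also handle the star-like obstruction, since $q\le \max_v\bigl(d_v+\frac1{d_v}\sum_{u\sim v}d_u\bigr)$ forces genuine average density once no single vertex dominates.
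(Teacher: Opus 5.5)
There is a genuine gap, at the step you flagged yourself; your lower bound is fine. The per-deletion estimate is not merely unproved: as a spectral statement it is false. Using the eigen-equation, the loss expands as
\[
\sum_{u\sim v}(x_u+x_v)^2=(q-d_v)x_v^2+\sum_{u\sim v}x_u^2 .
\]
Your degree bound controls only the first term. The second depends on where the Perron mass sits. If $N(v)$ carries almost all of it, the loss approaches $q/(q-d_v)$, which can be close to $2$.

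Here is an example with $\chi(F)=3$, so $2\pi=1$. Take a clique $A$ and an independent set $B$ with $|A|=|B|\approx N/2$, joined by a $\gamma N$-biregular bipartite graph. Add one vertex $v$ adjacent to all of $A$. Then:
\begin{itemize}
\item $q(G)\ge(1+\gamma)N$, and about $1-\gamma^2$ of the Perron mass lies on $A$;
\item $d_v\approx N/2$, which is below your threshold when $\varepsilon=2\gamma$;
\item compressing $Q(G)$ onto the Perron vector of $G-v$ together with $e_v$ gives $q(G)-q(G-v)\ge 2-2\gamma-O(\gamma^2)$.
\end{itemize}
That loss is far above your per-step allowance $2\pi+\varepsilon/2=1+\gamma$. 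Your tools (eigen-equation, Cauchy--Schwarz) never use $F$-freeness, so they cannot exclude this. Your fallback does not help either, since every entry here is $\Theta(N^{-1/2})$. Separately, nonnegativity of the potential alone only gives $n-i\ge 2/(2-2\pi-\varepsilon/2)$. To stop at linear size you need the potential to keep its initial slack $\varepsilon n/2$.

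The missing idea is to choose the deleted vertex by its Perron entry, not by its degree. The paper does not reprove this theorem; it imports it from the cited source. That argument is reflected here in Lemmas \ref{sdf}, \ref{min-comp} and \ref{min}, and it deletes a vertex $u$ with $x_u=x_{\min}$.

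Since $x_{\min}^2\le 1/N$, Lemma \ref{sdf} reads
\[
\frac{q(G-u)}{N-2}\ge\frac{q(G)}{N-1}+\Bigl(\frac{q(G)}{N-1}-1\Bigr)\eta,\qquad \eta=\frac{1-Nx_{\min}^2}{(N-2)(1-x_{\min}^2)}\ge 0 .
\]
So $q/(N-1)$ never decreases. Since it is at least $1+\varepsilon$, each step gains at least $\varepsilon(1-Nx_{\min}^2)/N$.

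Now $q/(N-1)\le 2$, while $\sum 1/N$ diverges. Hence some step with $N\ge e^{-1/(\varepsilon\eta_0)}n$ must have $x_{\min}^2\ge(1-\eta_0)/N$. At that step Lemma \ref{min} gives
\[
\delta\ge\frac{(1-\eta_0)q^2}{\eta_0N+2(1-\eta_0)q}\approx \frac q2\ge(\pi+\varepsilon/2-o(1))N .
\]
Then $e>\mathrm{ex}(N,F)$, and Erd\H{o}s--Stone--Simonovits gives the contradiction. So it is near-flatness of the Perron vector, not a low-degree deletion process, that produces the high-minimum-degree subgraph you were after.
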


For a vector $\mathbf{x}=(x_{1},\ldots,x_{n})$, we denote  $x_{\textup{min}}=\min\{x_{1},\ldots,x_{n}\}$.

\begin{lem}[\cite{AN2013}]\label{min}
Let $G$ be a graph of order $n$ with $q(G)=q$ and minimum degree $\delta(G)=\delta$. If  $\mathbf{x}=(x_{1},\ldots,x_{n})$ is a nonnegative unit  eigenvector  corresponding to $q$, then
$$x_{\textup{min}}^{2}(q^{2}-2q\delta+n\delta)\leq \delta.$$
\end{lem}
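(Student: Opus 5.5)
The plan is to read off the $u$-th coordinate of the eigen-equation $Q\mathbf{x}=q\mathbf{x}$ at a vertex $u$ of minimum degree, and then to apply Cauchy--Schwarz to the neighbourhood sum. Write $q=q(G)$, $\delta=\delta(G)$, and fix $u$ with $d(u)=\delta$. The $u$-th row gives
\[
(q-\delta)x_u=\sum_{v\in N(u)}x_v .
\]
Here $q\ge \delta$ always holds, since $q\ge 2\bar d\ge 2\delta$ (take the all-ones test vector). Both sides are therefore nonnegative, and squaring is legitimate.

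The key step is to bound the right-hand side using the unit-norm constraint. By Cauchy--Schwarz,
\[
\Bigl(\sum_{v\in N(u)}x_v\Bigr)^2\le \delta\sum_{v\in N(u)}x_v^2 .
\]
The vertices outside $N(u)$ number $n-\delta$, and each of them contributes at least $x_{\min}^2$ to $\|\mathbf{x}\|^2=1$. Vertex $u$ itself is among them, since $u\notin N(u)$. Hence
\[
\sum_{v\in N(u)}x_v^2\le 1-(n-\delta)x_{\min}^2 .
\]

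Combining these, and using $x_u\ge x_{\min}$ together with $q-\delta\ge 0$, we get
\[
(q-\delta)^2x_{\min}^2\le (q-\delta)^2x_u^2\le \delta\bigl(1-(n-\delta)x_{\min}^2\bigr).
\]
Expanding gives
\[
x_{\min}^2\bigl(q^2-2q\delta+\delta^2+n\delta-\delta^2\bigr)\le\delta ,
\]
which is exactly $x_{\min}^2(q^2-2q\delta+n\delta)\le\delta$.

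The only delicate point is the bookkeeping of which vertices lie outside $N(u)$: we need $u$ counted among them so that the count is $n-\delta$. We also need the sign $q-\delta\ge 0$ before replacing $x_u$ by $x_{\min}$. Both are immediate, so no real obstacle is expected.
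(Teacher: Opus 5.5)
Your proof is correct. The paper quotes this lemma from the literature without proof, and your argument is essentially the standard one: the eigen-equation at a minimum-degree vertex, then Cauchy--Schwarz on the neighbourhood sum, then the bound $\sum_{v\notin N(u)}x_v^2\ge (n-\delta)x_{\min}^2$. A minor point: the check $q\ge\delta$ is unnecessary, since squaring an equality and using $(q-\delta)^2x_{\min}^2\le (q-\delta)^2x_u^2$ only require $x_u\ge x_{\min}\ge 0$.
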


\begin{lem}\label{mindu}
 Let  $F$ be a graph with $\chi(F)\geq3$. Let $G\in \mathrm{EX}_{q}(n,F)$, and $\mathbf{x}=(x_{1},\ldots,x_{n})$ be a nonnegative unit  eigenvector  corresponding  to $q(G)$. If $0<\varepsilon<1$ and $x_{\textup{min}}^{2}>\frac{1-\varepsilon}{n}$, then for sufficiently large $n$,
$$\delta(G)>(\pi(F)-\varepsilon)n.$$
\end{lem}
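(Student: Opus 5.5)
We combine Lemma~\ref{min} with the asymptotic value of $q$ from Theorem~\ref{ess}. Write $q=q(G)$ and $\delta=\delta(G)$. Suppose for contradiction that $\delta\le(\pi(F)-\varepsilon)n$. Since $G\in\mathrm{EX}_q(n,F)$, Theorem~\ref{ess} gives $q=(2\pi(F)+o(1))n$. Write $\pi=\pi(F)$; since $\chi(F)\ge 3$ we have $\pi\ge 1/2>0$.

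By Lemma~\ref{min} together with the hypothesis $x_{\min}^2>\frac{1-\varepsilon}{n}$,
\[
\frac{1-\varepsilon}{n}\,(q^2-2q\delta+n\delta)<\delta,
\]
provided $q^2-2q\delta+n\delta>0$. That expression is always positive: as a function of $\delta$ it is linear with endpoint values $q^2$ at $\delta=0$ and $q^2-2q\delta+n\delta\ge (q-\delta)^2\ge0$ type bounds elsewhere. More simply, $q^2-2q\delta+n\delta=q(q-2\delta)+n\delta$, and when $q\ge2\delta$ this is clearly positive. Rearranging the inequality gives
\[
(1-\varepsilon)q^2<\delta\bigl(n+2(1-\varepsilon)q-(1-\varepsilon)n\bigr)=\delta\bigl(\varepsilon n+2(1-\varepsilon)q\bigr).
\]

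Now set $\delta=\beta n$ with $\beta\le\pi-\varepsilon$. Dividing by $n^2$ and using $q/n=2\pi+o(1)$, the inequality becomes
\[
(1-\varepsilon)4\pi^2\le \beta\bigl(\varepsilon+4(1-\varepsilon)\pi\bigr)+o(1).
\]
The right-hand side is increasing in $\beta$, so it is at most
\[
(\pi-\varepsilon)\bigl(\varepsilon+4(1-\varepsilon)\pi\bigr)=4(1-\varepsilon)\pi^2+\varepsilon\pi-4(1-\varepsilon)\pi\varepsilon-\varepsilon^2.
\]
Comparing with the left side, we need
\[
0\le \varepsilon\pi-4(1-\varepsilon)\pi\varepsilon-\varepsilon^2+o(1)=\varepsilon\bigl(\pi(4\varepsilon-3)-\varepsilon\bigr)+o(1).
\]
Since $\varepsilon<1$ and $\pi>0$, we have $\pi(4\varepsilon-3)-\varepsilon<\pi(\varepsilon-3)+3\pi\varepsilon-\varepsilon$. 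It is cleaner to argue directly: $\pi(4\varepsilon-3)-\varepsilon=-3\pi+\varepsilon(4\pi-1)$, and with $\pi<1$ and $\varepsilon<1$ this is less than $-3\pi+4\pi-1=\pi-1<0$. Hence the quantity is bounded above by a fixed negative constant times $\varepsilon$, which contradicts the $o(1)$ error for large $n$. Therefore $\delta>(\pi(F)-\varepsilon)n$.

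The main obstacle is the positivity of $q^2-2q\delta+n\delta$ when $q<2\delta$, which is needed to divide by it. Here $n\delta\ge q\delta$, because $q\le 2\Delta\le 2(n-1)$ — though that alone is not enough. The cleaner route is to avoid dividing at all: Lemma~\ref{min} already gives $x_{\min}^2(q^2-2q\delta+n\delta)\le\delta$ directly, and substituting the lower bound on $x_{\min}^2$ is valid whenever the bracket is nonnegative. If the bracket were negative, then $q^2<\delta(2q-n)$. At the scale $\delta\approx\beta n$, $q\approx2\pi n$, this reads $4\pi^2<\beta(4\pi-1)$, which fails for $\beta\le\pi-\varepsilon$ since $\beta(4\pi-1)\le 4\pi^2-\pi<4\pi^2$. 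So the bracket is positive and the argument above applies.
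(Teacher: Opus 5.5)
Your argument is correct and follows essentially the same route as the paper: you assume $\delta(G)\le(\pi(F)-\varepsilon)n$, combine $x_{\min}^2>(1-\varepsilon)/n$ with Lemma~\ref{min}, and contradict $q=(2\pi(F)+o(1))n$ from Theorem~\ref{ess}. The paper tracks explicit constants ($\varepsilon'$ and the monotonicity of $\delta/(q^2-2q\delta+n\delta)$) where you rearrange and work with $o(1)$, and it settles positivity at once via the identity $q^2-2q\delta+n\delta=(q-\delta)^2+\delta(n-\delta)$; you should also delete the line asserting a strict inequality between $\pi(4\varepsilon-3)-\varepsilon$ and $\pi(\varepsilon-3)+3\pi\varepsilon-\varepsilon$, since these are equal (your direct argument that follows it is correct).
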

\begin{proof}
Suppose, to the contrary, that $\delta(G)\leq(\pi(F)-\varepsilon)n$.
Denote $q=q(G)$ and $\delta=\delta(G)$ for brevity.
 Since
$$q^{2}-2q\delta+n\delta=(q-\delta)^{2}+\delta(n-\delta)>0,$$
by Lemma \ref{min},
we obtain
$$x_{\textup{min}}^{2}\leq \frac{\delta}{q^{2}-2q\delta+n\delta}.$$
Note that the right-hand side is increasing in $\delta$ and decreasing in $q$ on $[\delta,+\infty)$. Set $\varepsilon'=\pi(F)\varepsilon/(\pi(F)+\varepsilon)$.
By Lemma \ref{ess}, for sufficiently large $n$ we have
$$q\geq (2\pi(F)-\varepsilon')n>(\pi(F)-\varepsilon)n\geq \delta.$$
Combining these with $0<\varepsilon<1$ and $\pi(F)\geq1/2$, we derive
\begin{displaymath}
\begin{split}
x_{\textup{min}}^{2}n &\leq \frac{\delta n}{q^{2}-2q\delta+n\delta}
=\frac{\delta n}{n\delta+(q-\delta)^{2}-\delta^{2}}\\
&\leq \frac{\pi(F)-\varepsilon}{\pi(F)-\varepsilon
+(\pi(F)+\varepsilon-\varepsilon')^{2}-(\pi(F)-\varepsilon)^{2}}\\
&\leq \frac{\pi(F)-\varepsilon}{\pi(F)-\varepsilon
+4\pi(F)\varepsilon-2(\pi(F)+\varepsilon)\varepsilon'}\\
&= \frac{\pi(F)-\varepsilon}{\pi(F)-\varepsilon
+2\pi(F)\varepsilon}\\
&= 1- \frac{2\pi(F)\varepsilon}{\pi(F)-\varepsilon
+2\pi(F)\varepsilon}\\
&\leq  1-\varepsilon,
\end{split}
\end{displaymath}
which contradicts the assumption $x_{\textup{min}}^{2}>\frac{1-\varepsilon}{n}$. This completes the proof.
\end{proof}

The following signless Laplacian spectral stability theorem plays a crucial role in the spectral Tur\'an-type problems.

\begin{theorem}[\cite{ZLF2026}]\label{sst}
Let $r\ge 3$ and $F$ be a graph with $\chi(F)=r+1$. For every $\epsilon>0$, there exist $\delta>0$ and $n_{0}$ such that
if $G$ is an $F$-free graph on $n\geq n_{0}$ vertices with $q(G)\geq2(1- \frac{1}{r}-\delta)n$,  then $G$ can be obtained from $T_{n,r}$ by adding and deleting at most $\epsilon n^2$ edges.
\end{theorem}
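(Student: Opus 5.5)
The statement is the signless Laplacian spectral stability theorem of \cite{ZLF2026}. I would prove it in three steps: reduce from $F$-free to $K_{r+1}$-free at a cost of $o(n^2)$ edges, show that this deletion lowers $q$ by only $O(\epsilon n)$, and then convert the $Q$-index bound into an edge bound and apply Erd\H{o}s--Simonovits stability.

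\textbf{Step 1 (reduction to $K_{r+1}$-free).} Let $G$ be $F$-free with $q(G)\ge 2(1-\frac1r-\delta)n$. Apply Szemer\'edi's regularity lemma and the usual cleaning: delete the edges inside clusters, between irregular pairs, and in sparse pairs. This removes at most $\epsilon_1 n^2$ edges. The resulting graph $G'$ is $K_{r+1}$-free. Otherwise the reduced graph would contain $K_{r+1}$, and the blow-up lemma (counting lemma) would embed $F$, since $\chi(F)=r+1$ and $F$ has bounded order.

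\textbf{Step 2 (controlling the loss in $q$).} I need $q(G')\ge q(G)-O(\epsilon_1 n)$. For this I use the Rayleigh quotient $q(G)=\sum_{uv\in E(G)}(x_u+x_v)^2$ with the unit Perron vector $\mathbf{x}$ of $Q(G)$. The key point is that every entry is $O(n^{-1/2})$. From $q x_v=d(v)x_v+\sum_{u\sim v}x_u$ and Cauchy--Schwarz we get $(q-d(v))x_v\le\sqrt{d(v)}$. Since $r\ge 3$, we have $q-d(v)\ge q-n\ge(1-\frac2r-2\delta)n=\Omega(n)$. Hence $x_v\le C/\sqrt n$ with $C$ depending only on $r$; this is exactly where $r\ge3$ is used. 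Testing $Q(G')$ with the same vector then gives $q(G')\ge q(G)-\frac{4C^2}{n}\,\epsilon_1 n^2=q(G)-4C^2\epsilon_1 n$. So $q(G')\ge 2(1-\frac1r-\delta')n$ with $\delta'$ small.

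\textbf{Step 3 (from $q$ to edges, then stability).} I would first prune $G'$. Repeatedly delete a vertex of degree less than $(1-\frac1r-\eta)n$ and track $q$ through the same eigen-equation. Since every Perron entry is $O(n^{-1/2})$, the $Q$-index of the remaining graph on $n-k$ vertices stays close to $2(1-\frac1r)(n-k)$, while the He--Jin--Zhang bound $q\le q(T_{m,r})$ for $K_{r+1}$-free graphs caps it. This mimics Nikiforov's adjacency argument and should show that only $o(n)$ vertices are deleted. The remaining graph has minimum degree at least $(1-\frac1r-\eta)n$. Hence $e(G')\ge(1-\frac1r-\eta')\frac{n^2}{2}$. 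F\"uredi's (Erd\H{o}s--Simonovits) stability theorem for $K_{r+1}$-free graphs then shows that $G'$ is $\epsilon n^2/2$-close to $T_{n,r}$. Adding back the $O(\epsilon_1 n^2)$ edges removed in Steps 1--3 gives the claim for $G$.

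\textbf{Main obstacle.} I expect the hardest step to be Step 3, turning a $Q$-index lower bound into an edge-density lower bound without losing constant factors. The crude bound $q\le 4C^2e/n$ is useless here. As a first attempt, I would sum the eigen-equations to get $q\sum_v x_v=2\sum_v d(v)x_v$. Combined with the near-uniformity of $\mathbf{x}$, which follows from the entry bound together with $\sum_v x_v^2=1$, this should force most vertices to have degree close to $(1-\frac1r)n$. If that fails, I would fall back on the deletion argument described in Step 3.
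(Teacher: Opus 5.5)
The paper imports this theorem from \cite{ZLF2026} without proof, so your argument has to stand on its own. Steps 1 and 2 are fine. The regularity/removal reduction is standard. The estimate $(q-d(v))x_v=\sum_{w\sim v}x_w\le\sqrt{d(v)}$ gives $x_v\le C/\sqrt n$ with $C=(1-\frac2r-2\delta)^{-1}$, and this is where $r\ge 3$ enters.

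The gap is Step 3, which is the real content of the theorem.

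\emph{First attempt.} It needs near-uniformity of $\mathbf{x}$, but this does not follow from $x_v\le C/\sqrt n$ and $\|\mathbf{x}\|=1$: take $x_v=C/\sqrt n$ on $n/C^2$ vertices and $0$ elsewhere. Nor can $C$ be improved to $1+o(1)$: the $K_{t-1}$ vertices of $K_{t-1}\vee T_{n-t+1,r}$ have entries about $C/\sqrt n$. Combining $q\sum_v x_v=2\sum_v d(v)x_v$ with $\sum_v x_v\ge\sqrt n/C$ and $\sum_v d(v)x_v\le 2Ce(G')/\sqrt n$ only yields $e(G')\ge(1-\frac1r-\delta')\frac{n^2}{2C^2}$. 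That falls short of what F\"uredi's theorem needs by the constant factor $C^2>1$.

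\emph{Fallback.} It fails for the same reason. Restricting $\mathbf{x}$ to $V(H)\setminus\{u\}$ gives
\[
q(H-u)\ge q(H)-\frac{(q(H)-d(u))x_u^2+\sum_{w\sim u}x_w^2}{1-x_u^2}.
\]
With only $x_w\le C/\sqrt{|H|}$ available, the subtracted term is bounded merely by about $C^2q(H)/|H|$. You can afford to lose at most $q(H)/|H|\approx 2(1-\frac1r)$ per deleted vertex. The low degree of $u$ does not help, because nothing compares $x_u^2$ or $\sum_{w\sim u}x_w^2$ with $1/|H|$ at constant $1$. So no deletion is shown to gain anything, and the claim that only $o(n)$ vertices are deleted is unsupported.

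The missing idea is to delete a vertex of \emph{minimum Perron entry} rather than minimum degree, and to extract the degree condition only at the end. Fix an auxiliary $\varepsilon$ with $\delta\ll\varepsilon\ll\epsilon$. By Lemma~\ref{sdf} (from the same source), if $x_u=x_{\min}$ then
\[
\frac{q(G-u)}{n-2}\ge\frac{q(G)}{n-1}+\frac{1-nx_u^2}{(n-2)(1-x_u^2)}\Bigl(\frac{q(G)}{n-1}-1\Bigr).
\]
So while $x_{\min}^2<(1-\varepsilon)/|H|$, each deletion raises $q(H)/(|H|-1)$ by roughly $\varepsilon(1-\frac2r)/|H|$; here $r\ge3$ is used again. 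The ratio starts above $2(1-\frac1r)-2\delta$. It is capped by $2(1-\frac1r)+o(1)$, via Theorem~\ref{ess} (or He--Jin--Zhang for the $K_{r+1}$-free $G'$). A harmonic-sum count then shows the process stops after $O(\delta/\varepsilon)\,n$ deletions, at an induced subgraph $H$ with $x_{\min}^2\ge(1-\varepsilon)/|H|$.

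Lemma~\ref{min}, used as in Lemma~\ref{mindu}, then forces the minimum degree of $H$ to exceed $(1-\frac1r-\varepsilon)|H|$. Hence $e(G)\ge(1-\frac1r-O(\varepsilon+\delta/\varepsilon))\frac{n^2}{2}$, and Erd\H{o}s--Simonovits stability finishes. Since induced subgraphs of $F$-free graphs are $F$-free, this route does not even need your Steps 1 and 2.
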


The following two results on the signless Laplacian spectral radius are taken from \cite{ZYZ2026}.

\begin{lem}[\cite{ZYZ2026}]\label{t-color}
Let $n_{1},\ldots, n_{r}$ be positive integers and $\sum_{i=1}^{r}n_{i}=n-t+1$. Then
$$q(K_{t-1}\vee K_{r}(n_{1},\ldots, n_{r})\leq q(K_{t-1}\vee T_{n-t+1,r}),$$ with equality
 if and only if $K_{t-1}\vee K_{r}(n_{1},\ldots, n_{r})\cong K_{t-1}\vee T_{n-t+1,r}$.
\end{lem}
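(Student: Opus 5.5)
The plan is a Zykov-type balancing argument: show that moving one vertex from a larger part of $K_r(n_1,\ldots,n_r)$ to a strictly smaller part strictly increases the $Q$-index of $G=K_{t-1}\vee K_r(n_1,\ldots,n_r)$. Since repeated moves reach the balanced partition, this gives both the inequality and the equality case.

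First, use the symmetry of $G$ to write the Perron vector $\mathbf{x}$ of $Q(G)$ as constant on each block. Let it equal $z$ on $K_{t-1}$ and $x_i$ on the $i$-th part $V_i$. Put $m=n-t+1$ and $S=\sum_j n_jx_j$. The eigen-equations for $q=q(G)$ read
\[
(q-(n-n_i))x_i=(t-1)z+S-n_ix_i,\qquad (q-2(n-1)+(t-2))z=(t-1)\cdot 0+S
\]
up to the obvious bookkeeping. In particular $x_i=\dfrac{(t-1)z+S}{q-n+2n_i}$. Summing $n_ix_i$ gives the characteristic relation
\[
1=\sum_{i=1}^r\frac{n_i}{q-n+2n_i}\;+\;(\text{term from }K_{t-1}),
\]
more precisely $\Phi(q):=\sum_i \frac{n_i}{q-n+2n_i}$ combined with the $z$-equation gives a single secular equation $\Psi_{\mathbf n}(q)=0$. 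Here $q$ is its largest root, and $\Psi_{\mathbf n}$ is monotone beyond it.

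The key step is to compare two partitions $\mathbf n$ and $\mathbf n'$, where $\mathbf n'$ is obtained by replacing $(n_1,n_2)$, with $n_1\ge n_2+2$, by $(n_1-1,n_2+1)$. The function $f(a)=\frac{a}{q-n+2a}$ is strictly concave in $a$ for $q>n-2a$, and $q>n$ holds since $q\ge 2\delta$. By concavity,
\[
f(n_1-1)+f(n_2+1)>f(n_1)+f(n_2).
\]
Hence, at the value $q=q(G)$, the secular function of $G'$ has the sign indicating that its largest root exceeds $q(G)$. The $K_{t-1}$ part of the equation depends only on $m$ and $\sum_i f(n_i)$, so it is unchanged in form. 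This yields $q(G')>q(G)$.

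A cleaner alternative avoids the secular bookkeeping. Use the Rayleigh quotient $\mathbf{x}^TQ\mathbf{x}=\sum_{uv\in E}(x_u+x_v)^2$ with the Perron vector of $G$, modified by moving one vertex $v\in V_1$ to $V_2$ and assigning it a suitable value. The edge changes are as follows: $v$ loses its edges to $V_2$ ($n_2$ of them) and gains edges to $V_1\setminus\{v\}$ ($n_1-1$ of them). Since $n_1\ge n_2+2$ we have $x_1\le x_2$, from the formula for $x_i$. Assigning $v$ the value $x_1$ gives a gain of
\[
(n_1-1)(2x_1)^2-n_2(x_1+x_2)^2
\]
and this sign is not obvious. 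This is why I would prefer the secular/concavity route.

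The main obstacle is setting up the secular equation correctly with the $K_{t-1}$ block, and verifying that the comparison at $q(G)$ forces a larger root, that is, that the relevant function is monotone on $(\max_i(n-2n_i),\infty)$. With that in hand, iterating the moves reaches $T_{m,r}$ with strict increases. Equality therefore holds only for the balanced partition.
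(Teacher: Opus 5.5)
The paper does not prove this lemma; it quotes it from \cite{ZYZ2026}, so there is no in-text argument to compare against. Your secular-equation-plus-concavity route does give a complete proof, but two steps are wrong as written and need repair.

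First, the eigen-equation at a vertex of $K_{t-1}$ is $(q-n-t+3)z=S$, since its degree is $n-1$ and its neighbours contribute $(t-2)z+S$. The equation you display is not this one. Put $A=(t-1)z+S$. Then $z=A/(q-n+2)$, and dividing $A=(t-1)z+\sum_i n_ix_i$ by $A>0$ gives
\[
\Psi_{\mathbf n}(q):=\sum_{i=1}^{r}\frac{n_i}{q-n+2n_i}+\frac{t-1}{q-n+2}-1=0 .
\]
In other words, $G=K_{r+t-1}(n_1,\dots,n_r,1,\dots,1)$: the clique vertices act as singleton parts. Positivity of the Perron vector forces every denominator to be positive at $q=q(G)$. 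On the interval where all denominators are positive, each summand is strictly decreasing in $q$. Hence $q(G)$ is the unique zero of $\Psi_{\mathbf n}$ there, and the monotonicity you call the main obstacle is immediate.

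Second, $q\ge 2\delta$ does not give $q>n$. Here $2\delta=2(n-\max_i n_i)$, which drops below $n$ as soon as some part exceeds $n/2$. That is exactly the unbalanced situation your moves must handle. The inequality $q>n$ is also where a hypothesis on the number of parts has to enter, and your justification never uses one. Indeed, for $r=2$, $t=1$ the lemma is false, since $q(K_{a,n-a})=n$ for every $a$.

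The correct argument is this. The point $n$ lies in the domain, and $\Psi_{\mathbf n}(n)=\frac{r+t-1}{2}-1>0$ because $r\ge 3$; hence $q(G)>n$.

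With $c=q(G)-n>0$, your concavity step is right. The map $a\mapsto a/(c+2a)$ is strictly concave and the singleton terms do not change, so $\Psi_{\mathbf n'}(q(G))>0$. Since $q(G)>n$ lies in the decreasing branch of $\Psi_{\mathbf n'}$, its zero $q(G')$ exceeds $q(G)$. Each move lowers $\sum_i n_i^2$ by $2(n_1-n_2-1)\ge 2$. So the process ends at the balanced partition, every step is a strict increase, and this also yields the equality case.

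Your doubt about the Rayleigh-quotient variant is justified: for $t=1$ and parts $(5,3,4)$, the gain you wrote is negative.
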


\begin{lem}[\cite{ZYZ2026}]\label{Ktv}
Let $r\geq3$, $t\geq1$ be fixed integers, and let $\alpha=\frac{2(r-1)}{r}$ and
$\beta=\frac{4(r-1)}{r(r-2)}$. Then for sufficiently large $n$,
$q(K_{t-1}\vee T_{n-t+1,r})=\alpha n+(t-1)\beta+O(\frac{1}{n})$.
\end{lem}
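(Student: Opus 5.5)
The plan is to compute $q(K_{t-1}\vee T_{n-t+1,r})$ exactly, via an equitable partition of the vertex set, and then expand the largest root of the resulting characteristic polynomial in powers of $n$. Write $s=t-1$ and $H=K_{s}\vee T_{n-s,r}$. Let $A$ be the clique of size $s$. The $r$ parts of $T_{n-s,r}$ have sizes $m$ or $m+1$, where $m=\lfloor (n-s)/r\rfloor$. Let $B$ be the union of the $k$ parts of size $m+1$ and $C$ the union of the $r-k$ parts of size $m$.

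\textbf{Step 1 (quotient matrix).} The partition $\{A,B,C\}$ is equitable for $Q(H)$, because every vertex in a given class has the same degree and the same number of neighbours in each class. By symmetry the Perron vector of $Q(H)$ is constant on $A$, $B$ and $C$, so $q(H)$ is the largest eigenvalue of the $3\times 3$ quotient matrix $M$. The vertex degrees are $n-1$ on $A$, $s+(n-s)-(m+1)$ on $B$, and $s+(n-s)-m$ on $C$. Writing down the rows of $M$ then gives an explicit cubic $\phi(x)=\det(xI-M)$ whose coefficients are polynomials in $n$ (for fixed $r$, $s$, $k$).

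\textbf{Step 2 (heuristic leading terms).} First consider the balanced case $k=0$, so all parts have size $m=(n-s)/r$. Then $B$ is empty and the system collapses to
\[
(q-n-s+2)\bigl(q-s-2(r-1)m\bigr)=s\,r\,m .
\]
Substitute $q=\alpha n+c$ and use $2(r-1)m=\alpha(n-s)$ and $\alpha-1=(r-2)/r$. Matching the order-$n$ terms gives
\[
c=\frac{s}{\alpha-1}-s(\alpha-1)=s\,\frac{r^2-(r-2)^2}{r(r-2)}=s\beta .
\]
This shows where the constant $(t-1)\beta$ comes from. In the general case the sizes $m$ and $m+1$ differ from $(n-s)/r$ by $O(1)$, which changes the coefficients of $\phi$ only in lower-order terms.

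\textbf{Step 3 (rigorous localisation).} Set $x_0=\alpha n+s\beta$. I will evaluate $\phi(x_0\pm K/n)$ for a large constant $K$ and show that the two values have opposite signs. I will also show that the other two roots of $\phi$ lie at most about $(1-\varepsilon)\alpha n$, for instance by checking the sign of $\phi$ at $x=\alpha n/2$ and using that $\phi$ is monic of degree $3$. Together these place the largest root in $[x_0-K/n,\,x_0+K/n]$, which is exactly the claimed $q=\alpha n+(t-1)\beta+O(1/n)$.

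I expect this sign-change computation to be the main obstacle. The leading $n^2$- and $n$-order terms of $\phi(x_0)$ cancel by Step 2, so the argument must confirm that the remainder is $O(n)$. It must also confirm that $\phi'(x_0)$ has order $n^2$, since then a shift of $K/n$ changes $\phi$ by order $Kn$, which beats that $O(n)$ remainder for $K$ large.

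An alternative route avoids the cubic. One can perturb the balanced closed form via first-order eigenvalue perturbation, $q'-q=\mathbf{x}^{T}(Q'-Q)\mathbf{x}$ plus a second-order term. Moving one vertex between parts changes $Q$ by a matrix of $O(n)$ entries of size $1$, and the Perron entries are $\Theta(n^{-1/2})$, so this still yields an $O(1/n)$ correction.
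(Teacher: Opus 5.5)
The paper does not prove this lemma; it imports it from Zhao, You and Zeng. Your proposal therefore has to stand on its own.

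The framework is sound. $\{A,B,C\}$ is equitable, $q(H)$ is the top root of the quotient polynomial $\phi$, and your balanced computation correctly yields $c=s\beta$. A sign change at $x_0\pm K/n$ together with $\phi'(x_0)=\Theta(n^2)$ would finish the proof.

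\textbf{The gap is the unbalanced case $k\ge 1$.} This is not a side issue. The paper's induction compares the orders $n-1$ and $n$, which cannot both satisfy $r\mid n-s$, so the constant must be $(t-1)\beta$ in every residue class. You dispose of this case in two sentences: the $O(1)$ deviations of the part sizes ``change the coefficients of $\phi$ only in lower-order terms'', and the top-order terms of $\phi(x_0)$ ``cancel by Step 2''. Neither is a valid reason.
\begin{itemize}
\item The quotient entries are $\Theta(n)$ and $x_0=\Theta(n)$. An $O(1)$ change in a single entry therefore generically changes $\phi(x_0)$ by $\Theta(n^2)$, which is comparable to $\phi'(x_0)$. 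Such a change moves the top root by $\Theta(1)$ and alters the constant.
\item Already in your $2\times 2$ system, replacing $s+2(r-1)m$ by $s+2(r-1)m+1$ gives $q=\alpha n+s\beta+1+O(1/n)$.
\item Step 2 covers only $k=0$, so it says nothing about $k\ge 1$. What you need there is a genuine cancellation.
\end{itemize}
Two smaller slips. For the cubic, the coefficients of $\phi(x_0)$ that must vanish to leave an $O(n)$ remainder are the $n^3$- and $n^2$-coefficients, not the $n^2$- and $n$-terms. For $s=0$ or $k=0$ the quotient is smaller, so your root count changes.

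\textbf{The missing idea.} The imbalance enters only at second order, because the deviations $\delta_i=n_i-\frac{n-s}{r}$ satisfy $\sum_i\delta_i=0$. The cleanest way to use this is to eliminate the eigenvector rather than expand a determinant. Put $p=q-n$. Let $a$ be the Perron entry on $A$, $x_i$ the entry on the $i$th part, and $S=\sum_i n_ix_i$. The eigen-equations are $(p-s+2)a=S$ and $(p+2n_i)x_i=sa+S$, which give
\[
\sum_{i=1}^{r}\frac{n_i}{p+2n_i}=1-\frac{s}{p+2}.
\]
On $(-2,\infty)$ the left side decreases and the right side increases. Since $q(H)\ge 4e(H)/n>n-2$, the value $q(H)-n$ is the unique solution there, and this replaces your root counting.

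Now write the left side as $\frac{r}{2}-\frac{p}{2}\sum_i\frac{1}{p+2n_i}$ and expand about $D=p+\frac{2(n-s)}{r}$. The term linear in the $\delta_i$ vanishes, so the left side equals $\frac{n-s}{D}+O(n^{-2})$. At $p_0=(\alpha-1)n+s\beta$ one has $D=n+s\beta-\frac{2s}{r}$. The $1/n$-terms of the two sides then cancel exactly, because $\alpha-1+\beta=\frac{1}{\alpha-1}$. The difference of the two sides has derivative $-(1+o(1))/n$, so evaluating at $p_0\pm K/n$ gives the required sign change.

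If you prefer to keep the cubic, you must actually expand $\det(x_0I-M)$ with $m=(n-s-k)/r$. You then have to check that the $n^2$-coefficient vanishes for each $0\le k<r$; the identity $\sum_i\delta_i=0$ is what makes it vanish.

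Your alternative perturbative route has the same blind spot. When $r\nmid n-s$ there is no balanced graph on the same $n$ vertices from which to move vertices, so it never reaches the unbalanced case.
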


\begin{lem}\label{lem:2.2}
Let $G$ be a graph and  $\mathbf{x}$ be a positive unit eigenvector corresponding to $q(G)$.
Let $u,v\in V(G)$ with $uv\notin E(G)$ and $N_G(u)\neq N_G(v)$, and let $G'$ be the graph obtained from $G$ by deleting all
edges  incident with  $v$ and then adding all edges between $v$ and $N_{G}(u)$. If
\begin{equation}\label{xz}
\sum_{w\in N_{G}(u)}(x_{w}+x_{u})^{2}-q(G) x_{u}^{2}
\ge
\sum_{w\in N_{G}(v)}(x_{w}+x_{v})^{2}-q(G) x_{v}^{2},
\end{equation}
then $q(G')>q(G)$.
\end{lem}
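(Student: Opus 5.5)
The plan is to compare the quadratic form of $Q(G')$ at a suitably modified test vector against $q(G)$ via the Rayleigh quotient. Recall that for any vector $\mathbf{y}$,
\[
\mathbf{y}^{T}Q(H)\mathbf{y}=\sum_{ab\in E(H)}(y_a+y_b)^2 .
\]
Let $q=q(G)$. Define $\mathbf{y}$ by $y_w=x_w$ for $w\neq v$ and $y_v=x_u$; that is, $v$ receives the Perron entry of $u$, since in $G'$ the vertex $v$ becomes a twin of $u$. Because $uv\notin E(G)$, we have $u\notin N_G(u)$, so $v$ is not adjacent to $u$ in $G'$ either, and no loop issues arise. The edges of $G'$ are those of $G$ not incident with $v$, together with $\{vw: w\in N_G(u)\}$. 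Therefore
\[
\mathbf{y}^{T}Q(G')\mathbf{y}=\mathbf{x}^{T}Q(G)\mathbf{x}-\sum_{w\in N_G(v)}(x_w+x_v)^2+\sum_{w\in N_G(u)}(x_w+x_u)^2 ,
\]
where we used that no $w\in N_G(u)$ equals $v$, so the added terms involve $y_w=x_w$. Also $\|\mathbf{y}\|^2=1-x_v^2+x_u^2$.

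Hence
\[
\mathbf{y}^{T}Q(G')\mathbf{y}-q\|\mathbf{y}\|^2=\Bigl[\sum_{N_G(u)}(x_w+x_u)^2-qx_u^2\Bigr]-\Bigl[\sum_{N_G(v)}(x_w+x_v)^2-qx_v^2\Bigr]\ge 0
\]
by (\ref{xz}). By Rayleigh's principle, $q(G')\ge \mathbf{y}^{T}Q(G')\mathbf{y}/\|\mathbf{y}\|^2\ge q$, where $\|\mathbf{y}\|>0$ since $x_u>0$.

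The main obstacle is strictness. Suppose $q(G')=q$. Then equality holds throughout, so $\mathbf{y}$ is an eigenvector of $Q(G')$ for $q$, that is, $Q(G')\mathbf{y}=q\mathbf{y}$. We compare the eigen-equations at a vertex whose neighbourhood changed. Since $N_G(u)\ne N_G(v)$, there is a vertex $z\neq u,v$ lying in exactly one of $N_G(u)$ and $N_G(v)$. At $z$, $d_{G'}(z)=d_G(z)\pm1$, and the neighbour sum changes by $\pm y_v$. Here $y_v=x_u$, and the term is compared against the old contribution $x_v$. This gives
\[
(Q(G')\mathbf{y})_z=(Q(G)\mathbf{x})_z+\epsilon\,(x_z+x_u)-\epsilon'(x_z+x_v)\cdot[\ldots],
\]
where the signs depend on the case. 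In the case $z\in N_G(u)\setminus N_G(v)$, we get $(Q(G')\mathbf{y})_z=q x_z+(x_z+x_u)$. This exceeds $q y_z=q x_z$ since the entries are positive, a contradiction. In the case $z\in N_G(v)\setminus N_G(u)$, we get $(Q(G')\mathbf{y})_z=qx_z-(x_z+x_v)<qx_z$, again a contradiction.

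The one subtlety is to handle $u$ itself and to check that $u$'s own equation is unaffected. Since $v\notin N_G(u)$ and $v\notin N_{G'}(u)$, vertex $u$ is unaffected, and the argument at $z$ suffices. So we conclude $q(G')>q(G)$.
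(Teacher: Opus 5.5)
Your proof is correct and follows the paper's argument essentially step for step. You use the same test vector with $y_v=x_u$ and the same Rayleigh-quotient computation to get $q(G')\ge q(G)$. You then rule out equality by the same comparison of eigen-equations at a vertex $z\in N_G(u)\triangle N_G(v)$. The only blemish is the schematic display involving $\epsilon,\epsilon'$ and the placeholder $[\ldots]$; you can delete it, since the two explicit case computations that follow it already do the work.
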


\begin{proof}
Define a vector $\mathbf{y}$ on $V(G')$ by
$$
y_z=
\begin{cases}
x_{u},& z=v,\\
x_z,& z\neq v.
\end{cases}
$$
Then $\|\mathbf{y}\|^2=1-x_{v}^2+x_{u}^2>0$,
and so
\begin{align*}
\mathbf{y}^TQ(G')\mathbf{y}
&=\sum_{ij\in E(G')}(y_i+y_j)^2\\
&=q(G)-\sum_{w\in N_G(v)}(x_w+x_{v})^2
+\sum_{w\in N_G(u)}(x_w+x_{u})^2\\
&\geq q(G)+q(G)(x_{u}^2-x_{v}^2)\\
&=q(G)\|\mathbf{y}\|^2.
\end{align*}
Hence
\begin{equation}\label{xz-2}
q(G')\geq \frac{\mathbf{y}^T Q(G')\mathbf{y}}{\|\mathbf{y}\|^2}\geq q(G).
\end{equation}

If (\ref{xz}) is strict, the second inequality in (\ref{xz-2}) is strict, and therefore $q(G')>q(G)$.

Assume now that \eqref{xz} holds with equality and  $q(G')=q(G)$. Then the Rayleigh
quotient of $\mathbf{y}$ attains $q(G')$, implying that $\mathbf{y}$ is a positive eigenvector of $G'$ for
$q(G')$.
Since $N_G(u)\neq N_G(v)$, the symmetric difference $N_G(u)\triangle N_G(v)$ is nonempty.

Choose $z\in N_G(u)\triangle N_G(v)$. If $z\in N_G(u)\backslash N_G(v)$, then the edge $vz$ is
added in $G'$. The eigenequations at $z$ for $G$ and $G'$ give
\[
q(G)x_z=d_G(z)x_z+\sum_{i\in N_G(z)}x_i,
\]
and
\[
q(G'
)x_z=(d_G(z)+1
)x_z+x_u+\sum_{i\in N_G(z)}x_i.
\]
Since $q(G')=q(G)$, subtracting the above two equations yields  $0=x_z+x_u$, contradicting $x_z,x_u>0$.

If instead $z\in N_G(v)\backslash N_G(u)$, the edge $vz$ is deleted in $G'$. The eigenequation for $G'$ at $z$ gives
\[
q(G'
)x_z=(d_G(z)-1
)x_z+\sum_{i\in N_G(z)\setminus\{v\}}x_i.
\]
Comparing with the eigenequation for $G$ at $z$ and using $q(G')=q(G)$ yields $0=x_z+x_v$, again impossible.
Thus $q(G')>q(G)$ in all cases.
\end{proof}

Firstly, we demonstrate that the conclusion of Theorem \ref{thm:main1} holds under a stronger condition.

\begin{lem}\label{pro-lem}
For integers $1\le t\le \frac{s}{2}$ and $r\ge 3$, let
\[
F=\bigl(tK_{2}\cup \overline{K}_{s-2t}\bigr)\vee T_{s(r-1),r-1}.
\]
For sufficiently large $n$, if $G \in \mathrm{EX}_{q}(n, F)$ and
$\mathbf{x} = (x_1, \ldots, x_n)$ is a nonnegative unit eigenvector corresponding to $q(G)$
satisfying $x_{\min}^2 \ge \frac{1}{n}\bigl(1 - \frac{1}{\log\log n}\bigr)$, then
$G \cong K_{t-1} \vee T_{n-t+1,\, r}$.
\end{lem}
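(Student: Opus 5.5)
We combine the minimum-degree bound of Lemma~\ref{mindu} with the spectral stability theorem (Theorem~\ref{sst}) to put $G$ close to $T_{n,r}$ with large minimum degree. We then pin down the structure with the symmetrization criterion of Lemma~\ref{lem:2.2}, using the fact that $G$, being extremal, can have no symmetrizable pair.

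\textbf{Step 1: global structure.} Since $K_{t-1}\vee T_{n-t+1,r}$ is $F$-free, Lemma~\ref{Ktv} gives $q(G)\ge \alpha n+(t-1)\beta+O(1/n)$. Applying Lemma~\ref{mindu} with $\varepsilon=1/\log\log n$ (note that $x_{\min}>0$, so $\mathbf{x}$ is positive) gives $\delta(G)>(1-\frac1r-\frac{1}{\log\log n})n$. Theorem~\ref{sst} then yields a partition $V_1,\dots,V_r$ maximizing $\sum_{i<j}e(V_i,V_j)$ with $e(G)$ differing from $e(T_{n,r})$ by $\epsilon n^2$ and $|V_i|=n/r+O(\sqrt{\epsilon}n)$.

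Combining this with the degree bound, every vertex has $o(n)$ neighbours in its own part except for a set $W$ of "bad'' vertices. A bad vertex has at least $cn$ neighbours in every part, and $|W|=O(\epsilon n)$. The Perron bound is also used quantitatively: the eigen-equations and $x_{\min}^2\ge (1-o(1))/n$ force every entry to satisfy $x_v=(1+o(1))/\sqrt n$.

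\textbf{Step 2: count bad vertices and internal edges.} I will show $|W|\le t-1$. If $|W|\ge t$, take $t$ bad vertices. By Lemma~\ref{dx}, these vertices together with large common neighbourhoods in each part (of size $\Omega(n)$ since degrees are nearly $n(1-1/r)$) yield a copy of $F=(tK_2\cup\overline{K}_{s-2t})\vee T_{s(r-1),r-1}$. The edges of $tK_2$ come either from bad vertices paired with good neighbours inside parts, or from internal edges among good vertices. Similarly, the graph induced on $V_i\setminus W$ has matching number at most $t-1-|W|$ in total over all $i$; otherwise an induced-matching-type embedding of $F$ exists, again using common neighbourhoods from Lemma~\ref{dx}.

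\textbf{Step 3: symmetrization.} This is the main obstacle. We must show that the bad vertices are exactly $t-1$ vertices joined to everything, and that there are no other internal edges. For good vertices $u,v$ in the same part with $N(u)\ne N(v)$, I would evaluate both sides of (\ref{xz}) using $x_w=(1+o(1))/\sqrt n$. The left side equals roughly $\sum_{w\in N(u)}(x_w+x_u)^2-qx_u^2$, and the eigen-equation rewrites it as $\sum_{w\in N(u)}x_w^2+q x_u^2-2d(u)x_u^2$ plus a correction. The comparison reduces to comparing $d(u)$, $d(v)$ and $\sum_{N(u)}x_w$ up to $o(1)$ errors.

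The delicate point is that the errors must be $o(1/n)\cdot n$ precision relative to degree differences of order $1$. I would therefore choose $u$ to maximize $x_u$ (or the left side of (\ref{xz})) within the part, so that (\ref{xz}) holds automatically for every other vertex $v$ of that part that is not adjacent to $u$. Lemma~\ref{lem:2.2} then gives $q(G')>q(G)$. We need $G'$ to be $F$-free, which follows because $v$ becomes a clone of $u$ and the structure from Step 2 keeps a clone from creating $F$ (an embedding using both $u$ and $v$ with identical neighbourhoods can be rerouted to another clone of $u$, since the clone class is large). This contradicts extremality, so all nonadjacent vertices in a part are clones.

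Hence $G$ is a join of a graph on $W$ with a complete $r$-partite graph, and the internal edges all lie in $W$. Maximality together with $F$-freeness then forces $|W|=t-1$ and $G[W]=K_{t-1}$. Finally, Lemma~\ref{t-color} balances the parts, giving $G\cong K_{t-1}\vee T_{n-t+1,r}$.
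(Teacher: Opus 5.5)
Your outline tracks the paper's (stability partition, bad set $W$, matching bound, symmetrization via Lemma~\ref{lem:2.2}, then Lemma~\ref{t-color}), but Step~3 has a genuine gap.

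\textbf{The gap in Step~3.} Taking $u$ to maximise the left side of \eqref{xz} does make Lemma~\ref{lem:2.2} applicable. However, the $F$-freeness of $G'$ then rests entirely on $u$ lying in a symmetric class large enough that a copy of $F$ through both $u$ and $v$ can be rerouted through a third clone. Nothing in Steps~1--2 produces any large symmetric class, let alone one containing the maximiser. The eigen-equation rewrites the left side as $\sum_{w\in N(u)}x_w^2+(q-d(u))x_u^2$, which favours vertices carrying extra edges. So nothing stops the maximiser from being, for $t=2$ and $W=\emptyset$, the centre of a two-leaf star inside $V_1$. Cloning it onto a non-neighbour $v$ creates a $2K_2$ inside $V_1$, and hence a copy of $F$ in $G'$.

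\textbf{How the paper closes it.} This is exactly what Lemmas~\ref{lem:4.1}--\ref{lem:4.6} are for:
\begin{itemize}
\item a K\H{o}v\'ari--S\'os--Tur\'an-type count gives $|W|\le M=O(1)$;
\item single-vertex switching (using $x_{\min}^2>1/(2n)$ and $x_{\max}\le 16/\sqrt n$) gives $|\alpha(u)|\le M+2t$ and $|\beta(u)|\le 512(M+2t)$;
\item an iterative extraction then yields, in each part, a symmetric class of size at least $m+1$ covering all but $M_1$ vertices.
\end{itemize}
Lemma~\ref{lem:2.2} is only ever applied between two such large classes. Hence, whichever direction \eqref{xz} holds, the copied vertex has a spare clone. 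The leftover $O(1)$ vertices are handled not by symmetrization but by the counting in Claim~1: a vertex with more than $M_1$ neighbours in $\overline V_i$ must see the whole class.

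\textbf{Supporting claims that are wrong as stated.}
\begin{itemize}
\item In Step~2, Lemma~\ref{dx} only gives common neighbourhoods of size $t\,d-(t-1)|V_k|$, where $d$ is the guaranteed number of neighbours in $V_k$. A bad vertex is guaranteed only about $n/(2r)$ neighbours in another part. So two bad vertices in $V_1$ may see complementary halves of $V_2$, which is compatible with both the max-cut property and the bound on $\delta(G)$. At that stage you only know $|W|=O(\epsilon n)$; the paper obtains $|W|\le t-1$ only after Claim~1.
\item The matching bound ``in total over all $i$'' is false. Internal edges in different parts cannot serve together as the $tK_2$ of $F$: a complete $r$-partite graph plus one internal edge in each of two parts is $F$-free when $t=2$. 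Only the per-part bound $\mu(G[\overline V_i])\le t-1$ is available.
\item The claim $x_v=(1+o(1))/\sqrt n$ for every $v$ is false even for $K_{t-1}\vee T_{n-t+1,r}$, whose universal vertices have $x_v\approx\frac{r}{r-2}\cdot\frac{1}{\sqrt n}$. The hypothesis only bounds $\sum_v(x_v^2-x_{\min}^2)$. What you actually need is $x_{\max}=O(1/\sqrt n)$, which follows directly from the eigen-equation.
\end{itemize}
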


\section{Proof of Lemma \ref{pro-lem}}
In this section, we prove Lemma \ref{pro-lem}.
Let $F=\bigl(tK_{2}\cup \overline{K}_{s-2t}\bigr)\vee T_{s(r-1),r-1}$, $m = |V(F)|$ and $G\in \mathrm{EX}_{q}(n,F)$.
 We now proceed under the assumption of Lemma~\ref{pro-lem}, which ensures that $x_{\min}^2 \ge \frac{1}{n}\bigl(1 - \frac{1}{\log\log n}\bigr)$. Fix a  sufficiently small constant $\varepsilon$. Then $x_{\min}^2 \ge \frac{1}{n}(1 - \varepsilon)$  for sufficiently large $n$. By   Lemma
~\ref{mindu} and $\chi(F)=r+1$ , we obtain $$\delta(G)>\Big(\frac{r-1}{r}-\varepsilon\Big)n.$$

\begin{lem}\label{partition}
There exists a partition $V(G)=V_{1}\cup\cdots\cup V_{r}$ such that $\sum_{1\leq i<j\leq r}e(V_{i},V_{j})$ is maximum, $\sum_{i=1}^{r}e(G[V_{i}])\leq\varepsilon n^{2}$ and $\left||V_{i}|-\frac{n}{r}\right|\leq\varepsilon n$ for all $i\in[r]$.
\end{lem}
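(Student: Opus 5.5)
We combine the spectral stability theorem with the standard "max-cut partition" argument. First we need $q(G)$ to be large. Since $K_{t-1}\vee T_{n-t+1,r}$ is $F$-free and $G\in\mathrm{EX}_q(n,F)$, Lemma~\ref{Ktv} gives
\[
q(G)\ge q\bigl(K_{t-1}\vee T_{n-t+1,r}\bigr)=\frac{2(r-1)}{r}n+O(1).
\]
In particular $q(G)\ge 2(1-\frac1r-\delta)n$ for every fixed $\delta>0$ once $n$ is large. Apply Theorem~\ref{sst} with a parameter $\epsilon_1$, chosen small in terms of $\varepsilon$ (something like $\epsilon_1=\varepsilon^2$ will do). It yields that $G$ is obtained from $T_{n,r}$ by adding and deleting at most $\epsilon_1 n^2$ edges. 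Let $U_1,\dots,U_r$ be the parts of that $T_{n,r}$. Then $|U_i|\in\{\lfloor n/r\rfloor,\lceil n/r\rceil\}$ and
\[
\sum_i e(G[U_i])\le \epsilon_1 n^2 .
\]

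Next choose $V_1\cup\cdots\cup V_r$ to be a partition maximizing $\sum_{i<j}e(V_i,V_j)$; it exists since there are finitely many partitions. Comparing with $(U_i)$ gives
\[
\sum_{i<j}e(V_i,V_j)\ge \sum_{i<j}e(U_i,U_j)=e(G)-\sum_i e(G[U_i]),
\]
hence
\[
\sum_i e(G[V_i])\le \sum_i e(G[U_i])\le \epsilon_1 n^2\le \varepsilon n^2 .
\]
This is the second property.

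For the part sizes, we have two facts. First, $e(G)\ge e(T_{n,r})-\epsilon_1 n^2$. Second, $\sum_{i<j}|V_i||V_j|\ge \sum_{i<j}e(V_i,V_j)\ge e(G)-\epsilon_1 n^2$. So
\[
\sum_{i<j}|V_i||V_j|\ge \frac{r-1}{2r}n^2-2\epsilon_1 n^2-O(n).
\]
Write $|V_i|=n/r+a_i$ with $\sum a_i=0$. Then
\[
\sum_{i<j}|V_i||V_j|=\frac12\Bigl(n^2-\sum_i |V_i|^2\Bigr)=\frac{r-1}{2r}n^2-\frac12\sum_i a_i^2 .
\]
Hence $\sum_i a_i^2\le 4\epsilon_1 n^2+O(n)$, so each $|a_i|\le 2\sqrt{\epsilon_1}\,n+o(n)\le\varepsilon n$ for the choice of $\epsilon_1$ above and $n$ large. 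This gives the third property.

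The only delicate point is the order of quantifiers. $\varepsilon$ is fixed first, then $\epsilon_1$ is chosen from it, and Theorem~\ref{sst} supplies $\delta$ and $n_0$. We then need $q(G)\ge 2(1-\frac1r-\delta)n$, which holds for large $n$ by the lower bound above. The bound on the Perron vector and the minimum degree are not needed for this lemma; they will be used later.
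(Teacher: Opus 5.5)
Your proof is correct and follows the paper's argument essentially step for step: Theorem~\ref{sst}, then comparing a max-cut partition with the parts of the nearby $T_{n,r}$ to bound $\sum_i e(G[V_i])$, then a quadratic estimate on the part sizes from $e(G)\ge e(T_{n,r})-\epsilon_1 n^2$. One small fix: with $\epsilon_1=\varepsilon^2$ your bound only gives $|a_i|\le 2\varepsilon n+o(n)$, so take $\epsilon_1<\varepsilon^2/4$ (e.g.\ $\varepsilon^2/16$); alternatively, keep the paper's choice $\epsilon_1=\varepsilon^2/4$ and use the sharper inequality $\sum_j a_j^2\ge \frac{r}{r-1}a_i^2$, which follows from $\sum_j a_j=0$ and Cauchy--Schwarz.
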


\begin{proof}
Since $\chi(F)=r+1$ and $K_{t-1}\vee T_{n-t+1,r}$ is $F$-free,  Lemma~\ref{Ktv} gives
$$q(G)\geq q(K_{t-1}\vee T_{n-t+1,r})>2\Big(1-\frac{1}{r}\Big)n.$$
By Theorem~\ref{sst} (letting $\epsilon=\varepsilon^{2}/4$), we have $e(G)\geq e(T_{n, r})-\varepsilon^{2} n^{2}/4$. Furthermore, there exists a vertex partition $V(G)=U_{1}\cup\cdots\cup U_{r}$ with $\lfloor\frac{n}{r}\rfloor\leq|U_{i}|\leq\lceil\frac{n}{r}\rceil$ such that
$\sum_{i=1}^{r} e(G[U_{i}])\leq\varepsilon^{2} n^{2}/4$. Choose a partition $V(G)=V_{1}\cup\cdots\cup V_{r}$ such that
$\sum_{1\leq i<j\leq r} e\left(V_{i}, V_{j}\right)$ attains the maximum. Then
\[
\sum_{i=1}^{r} e\left(G[V_{i}]\right)\leq\sum_{i=1}^{r} e\left(G[U_{i}]\right)\leq \frac{\varepsilon^{2}n^{2}}{4}.
\]

Let $\max_{1\leq i\leq r}\left|\left|V_{i}\right|-\frac{n}{r}\right|=d$. Without loss of generality assume that $\left||V_{1}|-\frac{n}{r}\right|=d$. Then we have
\begin{align*}
e(G)&=\sum_{1\leq i<j\leq r} e(V_{i}, V_{j})+\sum_{i=1}^{r} e(G[V_{i}])\\
&\leq\sum_{1\leq i<j\leq r}\left|V_{i}\right|\left|V_{j}\right|+\frac{\varepsilon^{2}n^{2}}{4}\\
&=\left|V_{1}\right|\left(n-\left|V_{1}\right|\right)+\sum_{2\leq i<j\leq r}\left|V_{i}\right|\left|V_{j}\right|
+\frac{\varepsilon^{2}n^{2}}{4}\\
&=\left|V_{1}\right|\left(n-\left|V_{1}\right|\right)+\frac{1}{2}\left(\left(\sum_{i=2}^{r}\left|V_{i}\right|\right)^{2}
-\sum_{i=2}^{r}\left|V_{i}\right|^{2}\right)+\frac{\varepsilon^{2}n^{2}}{4}.
\end{align*}
By Cauchy-Schwarz inequality, we have $\left(\sum_{i=2}^{r}\left|V_{i}\right|\right)^{2}\leq(r-1)\sum_{i=2}^{r}\left|V_{i}\right|^{2}$. Together with $\left||V_{1}|-\frac{n}{r}\right|=d$, we get
\begin{align*}
e(G)&\leq\left|V_{1}\right|\left(n-\left|V_{1}\right|\right)+\frac{r-2}{2(r-1)}\left(n-\left|V_{1}\right|\right)^{2}
+\frac{\varepsilon^{2}n^{2}}{4}\\
&=\frac{r-1}{2 r} n^{2}-\frac{r}{2 r-2} d^{2}+\frac{\varepsilon^{2}n^{2}}{4}.
\end{align*}

On the other hand, since $e(T_{n, r})\geq \frac{r-1}{2 r} n^2-\frac{r}{8}$, we have
\[
e(G)\geq e\left(T_{n, r}\right)-\frac{\varepsilon^{2}n^{2}}{4}\geq\frac{r-1}{2 r} n^2-\frac{r}{8}-\frac{\varepsilon^{2}n^{2}}{4}.
\]
Together with the above inequality, we have
\[
d\leq\sqrt{\frac{(r-1)\varepsilon^{2} n^{2}}{r}+\frac{r-1}{4}}<\varepsilon n,
\]
as desired.
\end{proof}

Let $W=\cup_{i=1}^{r}W_{i}$, where $W_{i}=\{v\in V_{i}: d_{V_{i}}(v)\geq\varepsilon n\}$, and let $\overline{V}_{i}=V_{i}\backslash W_{i}$ for $i\in[r]$.

\begin{lem}\label{lem:4.1}
$|W|\le M$, where
\[
M:=\left\lceil 4^{(r-1)m}\left(\frac{2}{\varepsilon}\right)^m rm\right\rceil
\]
is  constant independent of $n$.
\end{lem}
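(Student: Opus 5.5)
Suppose, to the contrary, that $|W|>M$. By pigeonhole some $W_i$, say $W_1$, has $|W_1|>M/r$. The strategy is to show that such a large set of vertices with $\varepsilon n$ neighbours inside their own part forces a copy of $F$ in $G$, contradicting that $G$ is $F$-free. This will use the minimum degree bound $\delta(G)>(\frac{r-1}{r}-\varepsilon)n$ together with the balance $||V_i|-\frac nr|\le\varepsilon n$ from Lemma~\ref{partition}.

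\textbf{Step 1 (degrees into each part).} For any vertex $v\in V_i$, the maximality of $\sum_{i<j}e(V_i,V_j)$ gives $d_{V_j}(v)\ge d_{V_i}(v)$ for every $j$. Combined with $\delta(G)>(\frac{r-1}{r}-\varepsilon)n$ and $|V_j|\le \frac nr+\varepsilon n$, every vertex misses at most about $(r+1)\varepsilon n$ vertices of each part $V_j$, $j\neq i$. For $v\in W_1$ we also have $d_{V_1}(v)\ge\varepsilon n$, so $v$ has linearly many neighbours in every part, including its own.

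\textbf{Step 2 (a blown-up host).} Since $\sum_i e(G[V_i])\le\varepsilon n^2$, at most $O(\sqrt{\varepsilon})n$ vertices of each $V_j$ have $d_{V_j}\ge\sqrt\varepsilon n$. Outside these exceptional vertices, Lemma~\ref{dx} applied to the neighbourhoods of any bounded set of typical vertices yields large common neighbourhoods in the other parts, each of size at least $\frac nr-O(m\sqrt\varepsilon)n$. Iterating this greedily over the parts $V_2,\dots,V_r$ gives a complete $(r-1)$-partite graph $T_{s(r-1),r-1}$ that is moreover completely joined to many vertices of $W_1$.

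\textbf{Step 3 (the matching part).} This is the main obstacle. We need $t$ disjoint edges inside $V_1$ together with $s-2t$ further vertices, all of them completely joined to one copy of $T_{s(r-1),r-1}$. The plan is a Kővári–Sós–Turán / dependent-choice counting argument. Consider the $m$-tuples of $W_1$ and the vertices of $V_1$ adjacent to all members of a tuple. With the $(2/\varepsilon)^m$ and $4^{(r-1)m}$ factors built into $M$, a pigeonhole over the possible common-neighbourhood patterns within the other parts produces:
\begin{itemize}
\item $t$ vertices $w_1,\dots,w_t\in W_1$ with distinct partners $y_1,\dots,y_t\in N_{V_1}(w_k)$;
\item $s-2t$ further vertices of $V_1$;
\end{itemize}
all lying in a common neighbourhood of size $\Omega(\varepsilon^{m}4^{-(r-1)m})n$ in each $V_j$, $j\ge2$. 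A set of $s$ vertices from each of these common neighbourhoods then completes a copy of $F=(tK_2\cup\overline K_{s-2t})\vee T_{s(r-1),r-1}$.

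The delicate point is that the partners $y_k$ must be typical, with low inner degree, so that they share the common neighbourhoods. To handle this, I would first restrict each $N_{V_1}(w_k)$ to typical vertices, losing only $O(\sqrt\varepsilon n)\ll\varepsilon n$ vertices. Then I would choose the $w_k$ one at a time, shrinking each common neighbourhood by at most a constant factor per step; this is where the exponent $m$ in $M$ comes from. This contradiction gives $|W|\le M$.
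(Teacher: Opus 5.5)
\textbf{There is a genuine gap in your Steps 2--3.} The decisive step is producing many vertices of $W_1$ that are completely joined to one complete multipartite structure in the other parts. You only name this step, and the mechanisms you describe do not deliver it.

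Contrary to your Step 1, a vertex $w\in W_1$ is not almost complete to the other parts. Maximality of the partition only gives $d_{V_j}(w)\ge \frac12\bigl(d_G(w)-\sum_{s\ne 1,j}|V_s|\bigr)\approx \frac{n}{2r}$, so $w$ may miss about half of each $V_j$. Two such vertices may then have disjoint neighbourhoods in $V_j$, and Lemma~\ref{dx} says nothing about their common neighbourhoods. This breaks both of your proposed mechanisms:
\begin{itemize}
\item a greedy choice of $w_1,\dots,w_t$ ``shrinking each common neighbourhood by a constant factor'' has no justification;
\item a pigeonhole over neighbourhood patterns fails, because there are exponentially many patterns.
\end{itemize}

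\textbf{The missing idea is the paper's reversed double counting:} choose the sets on the typical side first. Suppose $W_{1,k}\subseteq W_1$ and pairwise completely joined $Z_1,\dots,Z_k$ are given. Let $V'_{k+1}$ be the common neighbourhood of $Z_1\cup\cdots\cup Z_k$ in $\overline{V}_{k+1}$. It is large by Lemma~\ref{dx}, because these are typical vertices. Each $w\in W_{1,k}$ still has at least $n/(3r)$ neighbours in $V'_{k+1}$. Count pairs $(w,Z)$ with $Z\subseteq N_{V'_{k+1}}(w)$ and $|Z|=m$, and average over the at most $\binom{(1/r+\varepsilon)n}{m}$ choices of $Z$. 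This yields $Z_{k+1}$ with at least $4^{-m}|W_{1,k}|$ common neighbours in $W_{1,k}$.

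Starting with $Z_1\subseteq\overline{V}_1$, which costs the factor $(\varepsilon/2)^m$, this produces exactly the constant $4^{(r-1)m}(2/\varepsilon)^m$ in $M$. It also makes your matching embedding unnecessary: $W_{1,r},Z_1,\dots,Z_r$ span $K_{r+1}(m,\dots,m)$, which already contains $F$.

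\textbf{Your typicality estimate also goes the wrong way.} From $\sum_i e(G[V_i])\le\varepsilon n^2$, the number of vertices with inner degree at least $\sqrt{\varepsilon}\,n$ is $O(\sqrt{\varepsilon})n$. Since $\varepsilon<1$, this is larger than $\varepsilon n$, not smaller. So deleting atypical vertices from $N_{V_1}(w_k)$ may leave nothing. In fact the stated bound $\varepsilon n^2$ alone only gives $|W|\le 2n$.

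The paper first uses the sharper bound $\sum_i e(G[V_i])\le\varepsilon^2n^2/4$, obtained in the proof of Lemma~\ref{partition}, to show $|W|\le\varepsilon n/2$. This is what guarantees:
\begin{itemize}
\item $d_{\overline{V}_1}(w)\ge\varepsilon n/2$ for every $w\in W_1$;
\item $d_{\overline{V}_j}(v)\ge n/r-(r+1)\varepsilon n$ for every $v\in\overline{V}_i$ with $j\ne i$.
\end{itemize}
Your argument needs this preliminary step as well.
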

\begin{proof}
We first show that $|W| \le \frac{\varepsilon n}{2}$.  By the definition of $W_i$, we have $d_{V_i}(v)\geq\varepsilon n$ for every $v\in W_i$. Combining with Lemma~\ref{partition},
\[
\varepsilon n|W|=\sum_{i=1}^r \varepsilon n|W_i| \le \sum_{i=1}^r \sum_{v\in W_i} d_{V_i}(v)
\le \sum_{i=1}^r \sum_{v\in V_i} d_{V_i}(v)
=2\sum_{i=1}^r e(G[V_i]) {\le} \frac{\varepsilon^2}{2}n^2,
\]
which implies $|W|\le \varepsilon n/2$.

Fix $1\le i\le r$. For each $j\in [r]\backslash \{i\}$, we have
$d_{V_i}(v)\le d_{V_j}(v)$ for any $v\in V_i$ as $\sum_{1\leq i<j\leq r}e(V_{i},V_{j})$ is maximum.
Therefore,
$d_{V_j}(v)\ge \frac12\bigl(d_{V_i}(v)+d_{V_j}(v)\bigr)$ for any $v\in V_i$.
By Lemma \ref{partition}, $|V_s|\le (\frac1r+\varepsilon)n$ for any $1\le s\le r$.

If $v\in W_i$, then for any $j\in[r]\backslash\{i\}$,
\[
d_{V_j}(v)\ge \frac12\left(d_G(v)-\sum_{s\in [r]\setminus\{i,j\}}|V_s|\right)
\ge \frac12\left(\frac{r-1}{r}n-\varepsilon n-(r-2)\left(\frac1r+\varepsilon\right)n\right)
= \left(\frac{1}{2r}-\frac{r-1}{2}\varepsilon\right)n.
\]
It follows that
\begin{eqnarray}\label{lower bound of W}
d_{\overline{V}_j}(v)\ge d_{V_j}(v)-|W|
\ge \left(\frac{1}{2r}-\frac{r-1}{2}\varepsilon\right)n-\frac{\varepsilon n}{2}
=\frac{n}{2r}-\frac12 r\varepsilon n.
\end{eqnarray}

If $v\in \overline{V}_i$, then $d_{V_i}(v)< \varepsilon n$. Thus for any $j\in [r]\backslash\{i\}$,
\begin{eqnarray*}
d_{V_j}(v)&\ge& d_G(v)-d_{V_i}(v)-\sum_{s\in [r]\setminus\{i,j\}}|V_s|\\
&\ge & \left(\frac{r-1}{r}-\varepsilon\right)n-\varepsilon n-(r-2)\left(\frac1r+\varepsilon\right)n
= \left(\frac1r-r\varepsilon\right)n.
\end{eqnarray*}
Hence
\begin{eqnarray}\label{lower bound of V}
d_{\overline{V}_j}(v)\ge d_{V_j}(v)-|W|
\ge \left(\frac1r-r\varepsilon\right)n-\frac{\varepsilon n}{2}
\ge \frac{n}{r}-(r+1)\varepsilon n.
\end{eqnarray}

In the following, we show that $|W_1|\le 4^{(r-1)m}(2/\varepsilon)^m m$. Let $m\le a\le b$ be reals and $\binom{x}{m}=x(x-1)\cdots(x-m+1)/m!$. Since $\frac{a-t}{b-t}\ge \frac{a-m}{b}$ for $0\le t\le m-1$, we have 
\begin{eqnarray}\label{binomial inequality}
\frac{\binom{a}{m}}{\binom{b}{m}}
=\prod_{t=0}^{m-1}\frac{a-t}{b-t}\ge \left(\frac{a-m}{b}\right)^m.
\end{eqnarray}

Next we construct, by induction on $k$ with $1\le k\le r$, sets $Z_k\subseteq \overline{V}_k$ with $|Z_k|=m$, together with sets $ W_1\supseteq W_{1,1}\supseteq W_{1,2}\supseteq\cdots\supseteq W_{1,k}, $ such that the following properties hold for every $k\in\{1,\dots,r\}$:
\begin{itemize}
\item[(a)] every vertex of $W_{1,k}$ is completely joined to $Z_1\cup\cdots\cup Z_k$;

\item[(b)] $Z_1,\dots,Z_k$ are pairwise completely joined, i.e., their union contains a copy of $K_k(m,\dots,m)$;

\item[(c)] $|W_{1,k}|\ge \left(\frac14\right)^{(k-1)m}\left(\frac{\varepsilon}{2}\right)^m |W_1|$.

\end{itemize}


\noindent\textit{Base case $k=1$.} For any $v\in W_1$, we have
$d_{\overline V_1}(v)\ge d_{V_1}(v)-|W_1|\ge \varepsilon n-\frac{1}{2}\varepsilon n=\frac{1}{2}\varepsilon n$.
Let
$
Y_1=\{(w,Z)\mid w\in W_1,\ Z\subseteq N_{\overline V_1}(w),\ |Z|=m\}.
$
By double counting, we have
\[
|W_1|\binom{\varepsilon n/2}{m}\le |Y_{1}| = \sum_{\substack{Z\subseteq \overline{V}_1\\|Z|=m}} \big|\{w\in W_1: Z\subseteq N(w)\}\big|.
\]
Since $|\overline{V}_1|\le (\frac1r+\varepsilon)n$, some $Z_1\subseteq \overline{V}_1$ of size $m$ has at least
\[
|W_1|\frac{\binom{\varepsilon n/2}{m}}{\binom{(1/r+\varepsilon)n}{m}}\ge \left(\frac{\varepsilon}{2}\right)^m |W_1|
\]
common neighbours in $W_1$. Indeed, by inequality (\ref{binomial inequality}) (applied with $a=\varepsilon n/2\ge m$ and $b=(1/r+\varepsilon)n$),
\[
\frac{\binom{\varepsilon n/2}{m}}{\binom{(1/r+\varepsilon)n}{m}}
\ge \left(\frac{\varepsilon n/2-m}{(1/r+\varepsilon)n}\right)^m
=\left(\frac{\varepsilon/2-m/n}{1/r+\varepsilon}\right)^m
\ge \left(\frac{\varepsilon}{2}\right)^m,
\]
where the last inequality   holds as $n$ is sufficiently large. 
Take $W_{1,1}$ to be this set of common
neighbours. So (a)--(c) hold for $k=1$.

\medskip
\noindent\textit{Induction step $k\to k+1$ ($k<r$).} Let $V_{k+1}'\subseteq \overline{V}_{k+1}$ be the set of common neighbours of the
$km$ vertices of $Z_1\cup\cdots\cup Z_k$. Since
$ |\overline V_{k+1}|\le (\frac{1}{r}+\varepsilon)n$
and $d_{\overline V_{k+1}}(u)\ge \frac{n}{r}-(r+1)\varepsilon n$ for any
$u\in \cup_{1\le i\le k}Z_i$, we have
$d_{\overline V_{k+1}}(u)\ge |\overline V_{k+1}|-(r+2)\varepsilon n$ for such $u$.
By Lemma \ref{dx},
\begin{align*}
|V_{k+1}'|
&=\Bigl|\bigcap_{u\in \cup_{1\le i\le k}Z_i} N_{\overline V_{k+1}}(u)\Bigr|
\ge km(|\overline V_{k+1}|-(r+2)\varepsilon n)-(km-1)|\overline V_{k+1}|\\
&\ge |\overline V_{k+1}|-km(r+2)\varepsilon n\ge |\overline V_{k+1}|-rm(r+2)\varepsilon n.
\end{align*}

For any $w\in W_1$, by (\ref{lower bound of W}), we have $d_{\overline{V}_{k+1}}(w)\ge \frac{n}{2r}-\frac12 r\varepsilon n$, and so
\[
d_{V_{k+1}'}(w)\ge d_{\overline{V}_{k+1}}(w)-\big|\overline{V}_{k+1}\setminus V_{k+1}'\big|
\ge \frac{n}{2r}-rm(r+3)\varepsilon n.
\]
Let
$
Y_{k+1}=\{(w,Z)\mid w\in W_{1,k},\ Z\subseteq N_{V_{k+1}'}(w),\ |Z|=m\}.
$
Exactly as in the base case, by double counting,
there is a subset $Z_{k+1}\subseteq V_{k+1}'$ of size $m$ whose vertices have at
least
$$\frac{|W_{1,k}|\binom{\frac{n}{3r}}{m}}{\binom{(\frac{1}{r}+\varepsilon)n}{m}}
\ge \Bigl(\frac{1}{4}\Bigr)^{m}|W_{1,k}|$$
common neighbors in $W_{1,k}$. Let $W_{1,k+1}\subseteq W_{1,k}$ be this set of common
neighbors. Then
\[
|W_{1,k+1}|\ge \Bigl(\frac{1}{4}\Bigr)^{m}|W_{1,k}|
\ge \Bigl(\frac{1}{4}\Bigr)^{km}\Bigl(\frac{\varepsilon}{2}\Bigr)^{m}|W_1|.
\]
Moreover, $Z_{k+1}\subseteq V_{k+1}'$ is joined to all of $Z_1,\dots,Z_k$ by the definition of $V_{k+1}'$, while $W_{1,k+1}\subseteq
W_{1,k}$ inherits (a). Thus (a)--(c) hold at step $k+1$, completing the induction.

 By (c) with $k=r$,
\[
|W_{1,r}|\ge \left(\frac14\right)^{(r-1)m}\left(\frac{\varepsilon}{2}\right)^m |W_1|.
\]
So, if $|W_1|\ge 4^{(r-1)m}(2/\varepsilon)^m m$, then $|W_{1,r}|\ge m$. It follows that the subgraph induced by $W_{1,r},Z_1,\dots,Z_r$ contains a complete $(r+1)$-partite graph each part of size at least $m$, contradicting that $G$ is $F$-free. Thus $|W_1|<4^{(r-1)m}(2/\varepsilon)^m m$. Similarly $|W_i|<4^{(r-1)m}(2/\varepsilon)^m m$ for $2\le i\le r$. Hence
\[
|W|<4^{(r-1)m}\left(\frac{2}{\varepsilon}\right)^m rm = M,
\]
completing the proof.
\end{proof}

\begin{lem}\label{lem:4.2}
Let $j\in[r]$ and let $1\leq \ell\leq m$ .  If
$u_{1},u_{2},\ldots,u_{\ell}\in \cup_{i\in [r]\backslash\{j\}}\overline V_{i}$,
then there are at least $\frac{n}{2r}$ vertices in $\overline V_j$ which are
adjacent to all the vertices $u_{1},u_{2},\ldots,u_{\ell}$ in $G$.
\end{lem}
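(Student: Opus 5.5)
The plan is to reuse the degree bound (\ref{lower bound of V}) from the proof of Lemma~\ref{lem:4.1} and then intersect neighbourhoods with Lemma~\ref{dx}, exactly as in the induction step there. Fix $j\in[r]$ and vertices $u_1,\ldots,u_\ell\in\cup_{i\in[r]\setminus\{j\}}\overline V_i$ with $\ell\le m$. Each $u_k$ lies in some $\overline V_i$ with $i\neq j$, so (\ref{lower bound of V}) applies and gives
\[
d_{\overline V_j}(u_k)\ge \frac{n}{r}-(r+1)\varepsilon n .
\]
By Lemma~\ref{partition} we have $|\overline V_j|\le |V_j|\le (\frac1r+\varepsilon)n$. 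Hence each $u_k$ misses at most $|\overline V_j|-d_{\overline V_j}(u_k)\le (r+2)\varepsilon n$ vertices of $\overline V_j$.

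Next, apply Lemma~\ref{dx} to the sets $N_{\overline V_j}(u_1),\ldots,N_{\overline V_j}(u_\ell)$, all of which lie inside $\overline V_j$. This gives
\[
\Bigl|\bigcap_{k=1}^{\ell} N_{\overline V_j}(u_k)\Bigr|\ge \ell\bigl(|\overline V_j|-(r+2)\varepsilon n\bigr)-(\ell-1)|\overline V_j| = |\overline V_j|-\ell(r+2)\varepsilon n .
\]
Using $\ell\le m$, this is at least $|\overline V_j|-m(r+2)\varepsilon n$.

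It remains to bound $|\overline V_j|$ from below. Lemma~\ref{partition} gives $|V_j|\ge (\frac1r-\varepsilon)n$. Lemma~\ref{lem:4.1} gives $|W_j|\le |W|\le M$, where $M$ is a constant independent of $n$. Therefore $|\overline V_j|\ge (\frac1r-\varepsilon)n-M$, and the number of common neighbours in $\overline V_j$ is at least
\[
\frac{n}{r}-\bigl(1+m(r+2)\bigr)\varepsilon n - M .
\]
Since $m$ and $r$ are fixed and $\varepsilon$ is chosen sufficiently small (say $(1+m(r+2))\varepsilon<\frac1{4r}$), this quantity is at least $\frac{n}{2r}$ once $n$ is large enough to absorb $M$.

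There is no real obstacle here; the argument is a routine counting bound. The only point needing care is that the choice of $\varepsilon$ may depend on $m$ and $r$ but not on $n$. This is consistent with the setup, where $\varepsilon$ is fixed before $n\to\infty$ and $M$ depends only on $\varepsilon$, $m$ and $r$.
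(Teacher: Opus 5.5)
Your proof is correct and follows essentially the same route as the paper: it combines the degree bound (\ref{lower bound of V}) with Lemma~\ref{dx} applied to the neighbourhoods inside $\overline V_j$. The only difference is cosmetic. The paper plugs the absolute bound $\frac{n}{r}-(r+1)\varepsilon n$ and the upper bound $|\overline V_j|\le(\frac1r+\varepsilon)n$ directly into Lemma~\ref{dx} to get $\frac{n}{r}-(\ell r+2\ell-1)\varepsilon n$, so it never needs the lower bound on $|\overline V_j|$ or the constant $M$ from Lemma~\ref{lem:4.1}.
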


\begin{proof}
By (\ref{lower bound of V}), for any $i\in[\ell]$, $d_{\overline{V}_j}(u_{i}) \geq \frac{n}{r} - (r+1)\varepsilon n$.
From Lemmas \ref{dx} and \ref{partition}, it follows that
\begin{align*}
\Bigl|\bigcap_{i\in [\ell]} N_{\overline V_{j}}(u_{i})\Bigr|
&\ge \ell\Bigl(\frac{n}{r} - (r+1)\varepsilon n\Bigr)-(\ell-1)|\overline V_{j}|\\
&\ge \ell\Bigl(\frac{n}{r} - (r+1)\varepsilon n\Bigr)-(\ell-1)\Bigl(\frac{1}{r}+\varepsilon\Bigr)n\\
&=\frac{n}{r} - (\ell r+2\ell-1)\varepsilon n\\
&\ge \frac{n}{2r},
\end{align*}
where the last inequality uses $\ell\le m$ and the fact that $\varepsilon$
was chosen sufficiently small.
\end{proof}

Let $x_{\textup{max}}=\max\{x_{1},\ldots,x_{n}\}$.  Assume that $u^{*}$ is a vertex with $x_{u^{*}}=x_{\textup{max}}$. Let $v^{*}$ be a vertex in $V(G)\backslash W$ such that $x_{v^{*}}=\max_{v\in V(G)\backslash W}x_{v}$.

\begin{lem}\label{lem:4.3}
 $x_{\textup{max}}\leq \frac{16}{\sqrt{n}}$.
\end{lem}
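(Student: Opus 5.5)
The plan is to read the bound off the eigen-equation of $Q(G)$ at the vertex $u^{*}$, using only the lower bound on $q(G)$ already established and the Cauchy--Schwarz inequality. No structural information about $G$ should be needed.

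\textbf{Step 1: eigen-equation at $u^{*}$.} Write $q=q(G)$. The eigen-equation at $u^{*}$ reads
\[
q\,x_{u^{*}}=d_G(u^{*})\,x_{u^{*}}+\sum_{w\in N_G(u^{*})}x_w ,
\]
so that
\[
\bigl(q-d_G(u^{*})\bigr)x_{\max}=\sum_{w\in N_G(u^{*})}x_w .
\]

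\textbf{Step 2: bound the right-hand side.} Since $\mathbf{x}$ is a nonnegative unit vector, Cauchy--Schwarz gives
\[
\sum_{w\in N_G(u^{*})}x_w\le\sum_{w\in V(G)}x_w\le\sqrt{n}\,\|\mathbf{x}\|=\sqrt n .
\]

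\textbf{Step 3: bound the coefficient on the left.} As in the proof of Lemma~\ref{partition}, $G\in\mathrm{EX}_q(n,F)$ and $K_{t-1}\vee T_{n-t+1,r}$ is $F$-free. Together with Lemma~\ref{Ktv}, this yields $q>2\bigl(1-\frac1r\bigr)n$. Since $d_G(u^{*})\le n-1$, we get
\[
q-d_G(u^{*})>\Bigl(1-\frac{2}{r}\Bigr)n+1\ge \frac n3 ,
\]
where the last inequality uses $r\ge 3$. In particular $q-d_G(u^{*})>0$, so we may divide by it.

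\textbf{Step 4: conclude.} Combining Steps 1--3 gives
\[
x_{\max}\le \frac{\sqrt n}{n/3}=\frac{3}{\sqrt n}\le\frac{16}{\sqrt n}.
\]

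The only point needing care is Step 3: the hypothesis $r\ge3$ is essential there, because for $r=2$ the gap $q-d_G(u^{*})$ need not be linear in $n$. The generous constant $16$ leaves ample room, so no finer estimate is required.
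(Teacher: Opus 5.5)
Your proof is correct, and it takes a more direct route than the paper's.

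\textbf{How the paper argues.} Both arguments start from the eigen-equation at $u^{*}$, but the paper does not apply Cauchy--Schwarz there. It bounds $\sum_{w\in N_G(u^{*})}x_w\le |W|x_{u^{*}}+(n-|W|)x_{v^{*}}$, where $v^{*}$ maximizes the entry of $\mathbf{x}$ outside the exceptional set $W$, and uses $|W|\le M$ (Lemma~\ref{lem:4.1}) to get $x_{v^{*}}>\frac14 x_{u^{*}}$. It then writes the eigen-equation at $v^{*}$ and uses that $v^{*}\notin W$ has fewer than $\varepsilon n$ neighbours in its own part $V_{i_0}$. This gives $\sum_{v\in\cup_{i\ne i_0}\overline V_i}x_v\ge\frac{n}{16}x_{u^{*}}$, and only then does it apply Cauchy--Schwarz, which is where the constant $16$ comes from.

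\textbf{What your route buys.} You need none of the partition of Lemma~\ref{partition}, the set $W$, or the vertex $v^{*}$. You use only a linear lower bound on $q(G)-d_G(u^{*})$, coming from $r\ge 3$ and the trivial bound $d_G(u^{*})\le n-1$, and you get the sharper constant $3$. The paper's detour yields $x_{v^{*}}>\frac14 x_{\max}$ as a by-product, but this is never used later, so nothing is lost.

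\textbf{A small point in your Step~3.} The strict bound $q(G)>2(1-\frac1r)n$, borrowed from the proof of Lemma~\ref{partition}, can fail when $t=1$. For $r\nmid n$ one even has $q(T_{n,r})<2(1-\frac1r)n$. You only need $q(G)\ge q(T_{n,r})\ge 4e(T_{n,r})/n\ge 2(1-\frac1r)n-\frac{r}{2n}$. The $+1$ from $d_G(u^{*})\le n-1$ absorbs this $O(1/n)$ loss, so $q(G)-d_G(u^{*})>\frac n3$ still holds for all relevant $n$.
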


\begin{proof}
Recall that $|W|\leq M$ and $q(G)>2(1-1/r)n$. Since $(q(G)-d_{G}(u^{*}))x_{u^{*}}\leq|W|x_{u^{*}}+(n-|W|)x_{v^{*}}$, we have
\begin{align*}
x_{v^{*}}&\geq\frac{(q(G)-d_{G}(u^{*})-|W|)x_{u^{*}}}{n}\\
&\geq \frac{(q(G)-n-M)x_{u^{*}}}{n}\\
&\geq (1-2/r-\varepsilon)x_{u^{*}}.
\end{align*}

It follows from $r\geq3$ that $x_{v^{*}}>\frac{1}{4}x_{u^{*}}$.
Assume that $v^{*}\in V_{i_{0}}$, where $1\leq i_{0}\leq r$. Then $|N_{\overline{V}_{i_{0}}}(v^{*})|\leq|N_{V_{i_{0}}}(v^{*})|<\varepsilon n$ as $v^{*}\notin W$. Hence
\begin{align*}
(q(G)-d_{G}(v^{*}))x_{v^{*}}
&= \biggl(\sum_{v\in N_{W}(v^{*})}x_{v}\biggr)
   + \biggl(\sum_{v\in N_{\overline{V}_{i_{0}}}(v^{*})}x_{v}\biggr)
   + \biggl(\sum_{v\in \cup_{i\in[r]\backslash \{i_{0}\}}N_{\overline{V}_{i}}(v^{*})}x_{v}\biggr)\\
&\leq |W|x_{u^{*}} + \varepsilon n x_{v^{*}} + \sum_{v\in\cup_{i\in[r]\backslash \{i_{0}\}}\overline{V}_{i}}x_{v}\\
&\leq (4M+\varepsilon n)x_{v^{*}} + \sum_{v\in\cup_{i\in[r]\backslash \{i_{0}\}}\overline{V}_{i}}x_{v}.
\end{align*}
It follows that
\[
\sum_{v\in\cup_{i\in[r]\backslash \{i_{0}\}}\overline{V}_{i}}x_{v}
\geq (q(G)-d_{G}(v^{*})- 4M-\varepsilon n)x_{v^{*}}
\geq \Bigl(1-\frac{2}{r}-2\varepsilon\Bigr)nx_{v^{*}}
\geq \frac{nx_{u^{*}}}{16}.
\]
Hence
$$\frac{nx_{u^{*}}}{16}\leq \bigg(\sum_{v\in\cup_{i\in[r]\backslash \{i_{0}\}}\overline{V}_{i}}x_{v}^{2}\bigg)^{\frac{1}{2}}
(|V(G)|)^{\frac{1}{2}}\leq \sqrt{n},$$
which implies $x_{u^{*}}\leq \frac{16}{\sqrt{n}}$. This completes the proof.
\end{proof}

For any  $1\leq i\leq r$ and $u\in \overline{V}_{i}$, define
\[
\alpha(u)=\{uv\in E(G)\mid  v\in (V_{i}\backslash\{u\})\cup W\},
\]
and
\[
\beta(u)=\{uv\notin E(G)\mid  v\in \cup_{ j\in[r]\backslash\{i\}}\overline{V}_{j}\}.
\]


\begin{lem}\label{lem:4.4}
For any  $1\leq i\leq r$ and $u\in \overline{V}_{i}$, we have $|\alpha(u)|\leq M+2t$
and $|\beta(u)|\leq 512(M+2t)$.
\end{lem}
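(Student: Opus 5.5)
Fix $i\in[r]$ and $u\in\overline V_i$. The plan has two parts: a purely combinatorial bound on $|\alpha(u)|$ coming from $F$-freeness, and then a spectral bound on $|\beta(u)|$ obtained by comparing $u$ with a well-behaved vertex through Lemma~\ref{lem:2.2}.

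\textbf{Bound on $|\alpha(u)|$.} Suppose $|\alpha(u)|\ge M+2t+1$. Since $|W|\le M$ by Lemma~\ref{lem:4.1}, the vertex $u$ then has at least $2t+1$ neighbours in $\overline V_i\setminus\{u\}$.

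First, we would find $t$ edges inside $\overline V_i$ forming an induced matching together with extra vertices. Every vertex of $\overline V_i$ has fewer than $\varepsilon n$ neighbours in $V_i$, so $G[\overline V_i]$ has maximum degree below $\varepsilon n$. Edges at $u$ alone give a star, not a matching, so we need to look further. One option is to bound the number of edges $e(G[\overline V_i])$ from below. A cleaner option is this: the target graph $(tK_2\cup\overline K_{s-2t})\vee T_{s(r-1),r-1}$ only requires a subgraph containing $tK_2$ (not necessarily induced) on the $i$-th side, together with $s-2t$ further vertices.

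Second, we would show that if $\overline V_i$ contains $t$ disjoint edges, then $G\supseteq F$. Take these $t$ edges and $s-2t$ other vertices of $\overline V_i$, at most $s$ vertices in total. Apply Lemma~\ref{lem:4.2} repeatedly for $j\ne i$ (at most $m$ vertices each time) to pick, greedily, sets of size $s$ in each $\overline V_j$ that are completely joined to everything chosen so far. This embeds $F$, a contradiction. So $\overline V_i$ has matching number at most $t-1$.

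Third, we handle a maximum matching of $G[\overline V_i]$, which covers at most $2t-2$ vertices. Then $u$ is one of these at most $2t-2$ cover vertices, or else all neighbours of $u$ in $\overline V_i$ lie among them. In the second case $u$ has at most $2t-2$ neighbours in $\overline V_i$. In the first case we argue via augmenting: if $u$ has at least $2t-1$ neighbours in $\overline V_i$, then $u$ together with its private neighbours outside the matched set yields a larger matching, unless all edges at $u$ go into the cover. Hence $d_{\overline V_i}(u)\le 2t$ in all cases, which gives $|\alpha(u)|\le |W|+2t\le M+2t$.

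\textbf{Bound on $|\beta(u)|$ (the main obstacle).} Here we use extremality through Lemma~\ref{lem:2.2}. Write $q=q(G)$ and use the eigen-equation
$$(q-d(u))x_u=\sum_{w\sim u}x_w .$$
This gives
$$\sum_{w\in N(u)}(x_w+x_u)^2-qx_u^2=\sum_{w\in N(u)}x_w^2+2x_u\sum_{w\sim u}x_w+d(u)x_u^2-qx_u^2 ,$$
and the last three terms simplify to $(q-d(u))x_u^2$, so the expression equals $\sum_{w\in N(u)}x_w^2+(q-d(u))x_u^2$.

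We would compare $u$ with a vertex $v\in\overline V_i$ that has essentially complete neighbourhood $\bigcup_{j\ne i}\overline V_j$, for instance by setting $N(v)$ to be the common neighbourhood built with Lemma~\ref{lem:4.2}. Rather than using a real vertex, a cleaner approach is to rewire $u$ itself. Let $G'$ be obtained by joining $u$ to all of $\bigcup_{j\ne i}\overline V_j$ and deleting the $\alpha(u)$ edges. Using the first part, $G'$ is still $F$-free: the vertex $u$ then sits in part $i$ with no neighbours inside $V_i$, and the $\overline V_i$ matching bound is unchanged. Then compare Rayleigh quotients with the same vector $\mathbf x$. The change in $\mathbf x^TQ\mathbf x$ is
$$\sum_{w\in\beta}(x_u+x_w)^2-\sum_{w\in\alpha}(x_u+x_w)^2 .$$
Using $x_{\min}^2\ge(1-\varepsilon)/n$ together with $x_{\max}\le 16/\sqrt n$ from Lemma~\ref{lem:4.3}, each $\beta$-term is at least $4(1-\varepsilon)/n$ and each $\alpha$-term is at most $4\cdot 256/n$. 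Extremality of $G$ forces the change to be $\le 0$. Therefore
$$|\beta(u)|\cdot 4(1-\varepsilon)\le 1024\,|\alpha(u)| ,$$
so $|\beta(u)|\le 512(M+2t)/(1-\varepsilon)$. The constant needs a little slack, obtained for example by using $x_{\max}^2\le 256/n$ against $(2x_{\min})^2$ more carefully. The delicate point is verifying that $G'$ remains $F$-free, which relies on the structural bound from the first part.
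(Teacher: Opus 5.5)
\textbf{Gap 1: the bound on $|\alpha(u)|$ fails when $u$ is covered by the maximum matching.} Your reduction to $d_{\overline V_i}(u)$ and your proof that $\mu(G[\overline V_i])\le t-1$ via Lemma~\ref{lem:4.2} follow the paper. The augmenting step in your ``first case'' is false. If $u$ is matched to $u'$, then however many unmatched neighbours $u$ has, it still lies in only one matching edge, so no larger matching appears. (An augmenting path would also need an unmatched neighbour of $u'$.) A star $K_{1,k}$ has matching number $1$ and a centre of degree $k$.

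No purely combinatorial argument can close this. For $t\ge 2$, add to $T_{n,r}$ a star with $\varepsilon n/2$ leaves inside one part. The result is $F$-free: deleting the centre leaves an $r$-partite graph, while $\chi(F-v)=r+1$ for every $v$. Its Tur\'an partition is a max-cut, and the centre lies in $\overline V_i$ with $\varepsilon n/2$ neighbours there.

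The paper's text (``This implies'') passes over the same case. What its argument actually proves is the bound for $u\in\overline V_i\setminus R$. There every $\overline V_i$-neighbour of $u$ lies in $R$, so $|\alpha(u)|\le |W|+2t-2$. That restricted form is all that is used later, since Lemma~\ref{lem:4.6} is applied only to vertices of $\overline V_i'=\overline V_i\setminus R_i$. The right repair is to restrict the statement to such $u$, not to patch the matched case.

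\textbf{Gap 2: $G'$ is not shown to be $F$-free.} For $|\beta(u)|$ you use the same $G'$ (delete $\alpha(u)$, add $\beta(u)$) and the same Rayleigh-quotient count as the paper. Your reason for $F$-freeness does not work. The matching bound in $\overline V_i$ is a consequence of $F$-freeness, not a sufficient condition for it, and $G'$ still contains $W$ and all other edges of $G$.

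The missing idea is a vertex swap. Since $G$ and $G'$ differ only in edges at $u$, any copy $H$ of $F$ in $G'$ contains $u$. All $H$-neighbours of $u$ (at most $m-1$) lie in $\bigcup_{j\ne i}\overline V_j$; this is why the edges from $u$ to $W$ must also be deleted. Lemma~\ref{lem:4.2} then gives at least $n/(2r)>m$ vertices of $\overline V_i$ adjacent in $G$ to all of them. So some $u'\in\overline V_i\setminus V(H)$ can replace $u$, producing $F\subseteq G$.

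\textbf{The constant needs no slack.} You lost a factor of $2$ in the division: $4(1-\varepsilon)|\beta(u)|\le 1024|\alpha(u)|$ gives $|\beta(u)|\le 256|\alpha(u)|/(1-\varepsilon)\le 512(M+2t)$. This holds only once $|\alpha(u)|\le M+2t$ is known, so again only for $u\notin R$.
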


\begin{proof}
For convenience, assume $i=1$. We first claim that $\mu(G[\overline{V}_{1}])\le t-1$.
Otherwise, let $\{u_{1,1}u_{1,2},\ldots,u_{1,2t-1}u_{1,2t}\}$ be a $t$-matching in
$G[\overline{V}_{1}]$, and let $u_{1,2t+1},\ldots,u_{1,s}\in\overline{V}_{1}$.
By Lemma~\ref{lem:4.2}, there exist $s$ vertices
$u_{2,1},\ldots,u_{2,s}\in\overline{V}_{2}$ that are adjacent to all vertices
$u_{1,1},\ldots,u_{1,s}$.
Applying Lemma~\ref{lem:4.2} again, there exist $s$ vertices
$u_{3,1},\ldots,u_{3,s}\in\overline{V}_{3}$ that are adjacent to all vertices
$u_{1,1},\ldots,u_{1,s},u_{2,1},\ldots,u_{2,s}$.
Repeating this process, we eventually obtain a subgraph of $G$ that contains
$F$ as a subgraph, a contradiction.
Hence $\mu(G[\overline{V}_{1}])\le t-1$.

 Let $R$ be the set of vertices in $\overline{V}_{1}$ that are the end
vertices of  a maximum matching  of $G[\overline{V}_{1}]$. Then $|R|\leq 2(t-1)$.
Thus, $\overline{V}_{1}\backslash R$ is  an independent set.
This implies that $|\alpha(u)|\leq M+2t$.

Suppose to the contrary that $|\beta(u)|>512(M+2t)$.
Let $G'$ be the graph obtained from $G$ by deleting all edges in $\alpha(u)$ and
adding all non-edges in $\beta(u)$.
Then, by Lemma~\ref{lem:4.3} and the bound
 $x_{\min}^2 \ge \frac{1}{n}\bigl(1 - \frac{1}{\log\log n}\bigr)>\frac{1}{2n}$, we have
\[
\sum_{uv\in\beta(u)}(x_{u}+x_{v})^{2}-\sum_{uv\in\alpha(u)}(x_{u}+x_{v})^{2}
\geq \frac{2}{n}|\beta(u)|- \frac{16^{2}\times4}{n}|\alpha(u)|>0.
\]
It follows that
\[
q(G')-q(G)
\geq \mathbf{x}^{T}(Q(G')-Q(G))\mathbf{x}
=\sum_{uv\in\beta(u)}(x_{u}+x_{v})^{2}-\sum_{uv\in\alpha(u)}(x_{u}+x_{v})^{2}>0,
\]
implying that $q(G')>q(G)$.
Since $G\in\mathrm{EX}_q(n,F)$, the graph $G'$ is not
$F$-free. Let $H\subseteq G'$ be a copy of $F$. Since $G$ and $G'$ differ
only in edges incident with $u$, while $G$ is $F$-free, we must have
$u\in V(H)$. Let $u_1,\ldots,u_{\ell}$ be the neighbours of $u$ in $H$.
By the construction of $G'$, all these vertices lie in
$\cup_{2\le i\le r}\overline V_i$. By Lemma \ref{lem:4.2}, at least $n/(2r)$
vertices of $\overline V_1$ are adjacent in $G$ to all of
$u_1,\ldots,u_{\ell}$. For sufficiently large $n$, we have $n/(2r)>m$, so among these common neighbours we
may choose $u'\in\overline V_1\backslash V(H)$. Replacing $u$ by $u'$ in
$H$ yields a copy of $F$ in $G$, a contradiction.
Hence, $|\beta(u)|\leq 512(M+2t)$.
\end{proof}

The ideas of the following two lemmas are directly derived from \cite{Zhang2026}.

\begin{lem}\label{lem:4.5}
Let $1\le i\le r$. Assume that $u_{1},u_{2},\ldots,u_{\tau}$ are symmetric vertices
of $G[V_{i}\cup W]$ and $\{u_{1},u_{2},\ldots,u_{\tau}\}\subseteq\overline{V}_{i}$.
Let $B$ denote the set of vertices in $V(G)\backslash(V_{i}\cup W)$ that are adjacent
to none of $u_{1},u_{2},\ldots,u_{\tau}$.
If $u_{1},u_{2},\ldots,u_{\tau}$ are not symmetric in $G$, then there exists a vertex
$v\notin B\cup(V_{i}\cup W)$ that is non-adjacent to at least
$\frac{1}{2m}\tau$ vertices among $u_{1},u_{2},\ldots,u_{\tau}$.
\end{lem}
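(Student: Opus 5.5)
We argue by contradiction, using the symmetrization Lemma~\ref{lem:2.2} together with the $F$-freeness of $G$ and the extremality of $G\in\mathrm{EX}_q(n,F)$. Suppose that every vertex $v\notin B\cup V_i\cup W$ is non-adjacent to fewer than $\frac{\tau}{2m}$ of $u_1,\ldots,u_\tau$.

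First we dispose of small $\tau$. Since the $u_\ell$ are symmetric in $G[V_i\cup W]$ but not in $G$, there is a vertex $v_0\notin V_i\cup W$ that is adjacent to some but not all of the $u_\ell$. Then $v_0\notin B$, and $v_0$ is non-adjacent to at least one $u_\ell$. If $\tau<2m$, then $\frac{\tau}{2m}<1$, so $v_0$ already satisfies the conclusion. Hence we may assume $\tau\ge 2m$.

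Next we symmetrize. For each $\ell$ let
\[
\Phi(u_\ell)=\sum_{w\in N_G(u_\ell)}(x_w+x_{u_\ell})^2-q(G)x_{u_\ell}^2 .
\]
Choose $u_j$ maximizing $\Phi$. Since the $u_\ell$ do not all have the same neighbourhood in $G$, some $u_k$ satisfies $N_G(u_k)\neq N_G(u_j)$. Also $u_ju_k\notin E(G)$, because the $u_\ell$ form an independent set. Let $G'$ be obtained from $G$ by deleting all edges at $u_k$ and joining $u_k$ to $N_G(u_j)$. Inequality \eqref{xz} holds with $u=u_j$ and $v=u_k$, so Lemma~\ref{lem:2.2} gives $q(G')>q(G)$. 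As $G\in\mathrm{EX}_q(n,F)$, the graph $G'$ contains a copy $H$ of $F$. Since $G$ is $F$-free, $u_k\in V(H)$.

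We now locate the neighbours of $u_k$ in $H$. Each lies in $N_G(u_j)$, so it lies either in $(V_i\cup W)\cap N_G(u_j)$ or in $N_G(u_j)\setminus(V_i\cup W)$. A vertex of the latter set is adjacent to $u_j$, so it is not in $B$. No neighbour of $u_k$ in $H$ is another $u_\ell$, because $u_j$ has no neighbours among the $u_\ell$. Let $S$ be the set of neighbours of $u_k$ in $H$ lying outside $V_i\cup W$; then $|S|\le m$. By the contrary hypothesis, each $v\in S$ is non-adjacent to fewer than $\frac{\tau}{2m}$ of the $u_\ell$. Call $u_\ell$ \emph{good} if it is adjacent in $G$ to all of $S$. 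The number of good vertices exceeds $\tau-m\cdot\frac{\tau}{2m}=\frac{\tau}{2}\ge m$, while $H-u_k$ has at most $m-1$ vertices. Hence there is a good $u'\notin V(H)\setminus\{u_k\}$.

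Finally we pull $H$ back into $G$. Every $u_\ell$ has the same neighbourhood as $u_j$ within $V_i\cup W$, so $u'$ is adjacent in $G$ to every $H$-neighbour of $u_k$ inside $V_i\cup W$. Being good, $u'$ is also adjacent to all of $S$. Replacing $u_k$ by $u'$ in $H$ therefore gives a copy of $F$ in $G$ (if $u'=u_k$, then $H$ already lies in $G$). This contradicts the $F$-freeness of $G$.

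The main obstacle is the spectral step. A single application of Lemma~\ref{lem:2.2} must be justified by choosing the maximizer $u_j$ of $\Phi$, and the strict increase of $q$ there needs $N_G(u_k)\neq N_G(u_j)$. The counting itself is routine once $\tau\ge 2m$ is arranged.
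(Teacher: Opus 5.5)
Your proof is correct, but it takes a different route from the paper. The paper does not symmetrize here. It takes a vertex $v_1\in Z=V(G)\setminus(B\cup V_i\cup W)$ that is non-adjacent to some $u_1$; such a vertex exists because the $u_\ell$ are not symmetric in $G$. It then adds the single edge $u_1v_1$, which strictly increases $q$ simply because $\mathbf{x}>0$. Let $v_1,\dots,v_j$ ($j\le m$) be the vertices of $Z$ in the resulting copy of $F$. The same swap argument you use shows that at most $m$ of the $u_\ell$ are adjacent to all of $v_1,\dots,v_j$. Pigeonhole over these $j\le m$ vertices then directly produces $v$ with at least $(\tau-m)/m\ge \tau/(2m)$ non-neighbours among the $u_\ell$.

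The paper's argument is constructive and needs only monotonicity of $q$ under edge addition. Yours argues by contradiction and invokes the heavier Lemma~\ref{lem:2.2}, with the $\Phi$-maximizer chosen to secure \eqref{xz}, so it also relies on that lemma's strictness analysis. The combinatorial core is the same in both: count the $u_\ell$ adjacent to all outside-neighbours of the modified vertex, then swap.

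One small point in your favour: you handle $0<\tau/(2m)<1$ properly by exhibiting a distinguishing vertex $v_0$. The paper's ``nothing to prove'' glosses over the need for a vertex with at least one non-adjacency in that case.
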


\begin{proof}
We may assume $\frac{1}{2m}\tau\ge 1$; otherwise there is nothing to prove.
Then $\tau\ge 2m$. For convenience, suppose $i=1$.
Since $u_{1},u_{2},\ldots,u_{\tau}$ are symmetric in $G[V_{1}\cup W]$ and no vertex
of $B$ is adjacent to any  of $u_1,\ldots,u_{\tau}$, the set $\{u_{1},\dots,u_{\tau}\}$ remains
symmetric in $G[B\cup V_{1}\cup W]$.

Set $Z=V(G)\backslash\bigl(B\cup V_{1}\cup W\bigr)$.
If every $u_{i}$ is adjacent to all vertices of $Z$, then
$u_{1},\dots,u_{\tau}$ are symmetric in $G$, which contradicts the hypothesis.
Thus we may assume that $u_{1}$ is not adjacent to some $v_{1}\in Z$.
Construct $G'$ from $G$ by adding the edge $u_{1}v_{1}$.
Since $x_{\min}^{2}\ge\frac{1}{n}\bigl(1-\frac{1}{\log\log n}\bigr)>0$,
we have
\[
q(G')-q(G)
\geq\mathbf{x}^{T}\!\bigl(Q(G')-Q(G)\bigr)\mathbf{x}
=(x_{u_{1}}+x_{v_{1}})^{2}>0,
\]
so $q(G')>q(G)$. Because $G\in\mathrm{EX}_{q}(n,F)$, it follows that
$G'$ contains a subgraph  $F$. Clearly, $F$ contains the edge
$u_{1}v_{1}$.

Let $v_{1},\dots,v_{j}$ be the vertices of $Z$ that lie in $F$, where
$1\le j\le m$. We show that at most $m$ vertices among
$\{u_{1},\dots,u_{\tau}\}$ are adjacent to all of $v_{1},\dots,v_{j}$.
Otherwise, we can choose $u_{k}\notin V(F)$ with $1\le k\le\tau$ such that
$u_{k}$ is adjacent to every $v_{\ell}$ ($1\le\ell\le j$).
By the symmetry of $\{u_{1},\dots,u_{\tau}\}$ in $G[B\cup V_{1}\cup W]$,
every neighbour of $u_{1}$ in $F$ is also adjacent to $u_{k}$ in $G$.
Let $F'$ be the subgraph of $G$ induced by
$(V(F)\setminus\{u_{1}\})\cup\{u_{k}\}$.
Then $F\subseteq F'$, which contradicts the $F$-freeness of $G$.

Consequently, at least $\tau-m$ vertices among $\{u_{1},\dots,u_{\tau}\}$ fail
to be adjacent to some vertex in $\{v_{1},\dots,v_{j}\}$. By the pigeonhole
principle, there exists a vertex $v\in\{v_{1},\dots,v_{j}\}$ that is non-adjacent
to at least
\[
\frac{\tau-m}{j}
\ge\frac{\tau-m}{m}
\ge\frac{1}{2m}\tau
\]
vertices among $u_{1},u_{2},\ldots,u_{\tau}$.
This completes the proof.
\end{proof}

\begin{lem}\label{lem:4.6}
Let $1\le i\le r$. Assume that $u_{1},u_{2},\ldots,u_{\tau}$ are symmetric vertices
of $G[\overline{V}_{i}]$. Then there exist at least
$\bigl(\tfrac{1}{2m}\bigr)^{512(M+2t)}2^{-M}\tau$ vertices among
$u_{1},u_{2},\ldots,u_{\tau}$ that are symmetric in $G$.
\end{lem}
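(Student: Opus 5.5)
We prove Lemma~\ref{lem:4.6} in two refinement stages. First we pass to a large subfamily that is symmetric in $G[V_i\cup W]$. Then we iterate Lemma~\ref{lem:4.5} to handle the vertices outside $V_i\cup W$.

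\emph{Stage 1 (refining by $W$ and by $V_i$).} Let $u_1,\dots,u_\tau$ be symmetric in $G[\overline V_i]$. Every neighbour of a $u_k$ inside $V_i$ lies either in $\overline V_i$ or in $W_i\subseteq W$. Neighbours in $\overline V_i$ are common to all $u_k$ by assumption, and the $u_k$ are pairwise non-adjacent. So the only possible asymmetry inside $G[V_i\cup W]$ comes from the adjacencies between the $u_k$ and $W$. Each $u_k$ has a trace $N_W(u_k)\subseteq W$, and by Lemma~\ref{lem:4.1} there are at most $2^{|W|}\le 2^M$ possible traces. By pigeonhole, some trace is shared by at least $2^{-M}\tau$ of the $u_k$. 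These vertices are symmetric in $G[V_i\cup W]$ and still lie in $\overline V_i$.

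\emph{Stage 2 (iterating Lemma~\ref{lem:4.5}).} Start with $S_0$, the family from Stage 1, with $|S_0|\ge 2^{-M}\tau$, and with $B_0$ the set of vertices outside $V_i\cup W$ adjacent to none of $S_0$. If $S_0$ is symmetric in $G$, we stop. Otherwise Lemma~\ref{lem:4.5} provides a vertex $v_1\notin B_0\cup V_i\cup W$ that is non-adjacent to at least $|S_0|/(2m)$ members of $S_0$; let $S_1$ be those members.

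The family $S_1$ is still symmetric in $G[V_i\cup W]$. Moreover, $v_1$ is adjacent to none of $S_1$, so $v_1$ joins the new set $B_1$. Also $B_0\subseteq B_1$, since vertices adjacent to nobody in $S_0$ are adjacent to nobody in $S_1$. So each step strictly enlarges the set of outside vertices on which the family is ``uniformly non-adjacent''. We repeat this, and the number of steps must be bounded.

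\emph{Bounding the number of steps, the main obstacle.} Each new vertex $v_j$ lies outside $V_i\cup W$, hence in some $\overline V_{j'}$ with $j'\ne i$, and each $v_j$ is distinct because $v_j\notin B_{j-1}\supseteq\{v_1,\dots,v_{j-1}\}$. Fix any $u\in S_\ell$ at the end of $\ell$ steps. Then $u$ is non-adjacent to all of $v_1,\dots,v_\ell$, which are distinct vertices of $\bigcup_{j'\ne i}\overline V_{j'}$. Hence $\ell\le|\beta(u)|\le 512(M+2t)$ by Lemma~\ref{lem:4.4}. The process therefore terminates after at most $512(M+2t)$ steps, with a family symmetric in $G$ of size at least $(1/(2m))^{512(M+2t)}2^{-M}\tau$.

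One caveat remains: Lemma~\ref{lem:4.5} gives nothing when $|S|<2m$. This is harmless. If the bound fell below $1$, the statement is vacuous. Otherwise the sizes stay at least $1$ throughout, and a single vertex is trivially symmetric, which closes the degenerate case. The care needed is only in checking that $B$ is monotone along the iteration and that the $v_j$ are distinct, so that Lemma~\ref{lem:4.4} applies.
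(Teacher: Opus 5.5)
Your proposal is correct and follows the paper's proof essentially step for step. You pigeonhole on the at most $2^{M}$ traces $N_W(u_k)$ to get a family symmetric in $G[V_i\cup W]$, then iterate Lemma~\ref{lem:4.5}, and bound the number of iterations by $|\beta(u)|\le 512(M+2t)$ from Lemma~\ref{lem:4.4}. The only difference is bookkeeping: you count the distinct $v_1,\dots,v_\ell$ directly, whereas the paper counts via the strictly increasing chain $B_0\subsetneq B_1\subsetneq\cdots\subsetneq B_\ell$.
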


\begin{proof}
We may assume
$\bigl(\tfrac{1}{2m}\bigr)^{512(M+2t)}2^{-M}\tau\ge 1$;
otherwise there is nothing to prove. Then
$\tau\ge (2m)^{512(M+2t)}2^{M}$.
For convenience, suppose $i=1$. By Lemma \ref{lem:4.4},
$|\beta(u_{j})|\le 512(M+2t)$ for each $1\le j\le\tau$.
Since  there are at most $2^{M}$ possible
neighborhoods in $W$,
at least $\tau 2^{-M}$ vertices among $u_{1},\dots,u_{\tau}$ are symmetric
in $G[V_{1}\cup W]$, i.e., have identical neighbourhoods in $W$.
Without loss of generality, assume that
$u_{1},\dots,u_{\tau_{0}}$ are such vertices, where $\tau_{0}\ge\tau 2^{-M}$.

Let $B_{0}$ denote the set of vertices in $V(G)\backslash(V_{1}\cup W)$ that are
adjacent to none of $u_{1},\dots,u_{\tau_{0}}$. If $u_{1},\dots,u_{\tau_{0}}$ are
symmetric in $G$, the lemma follows from $\tau_{0}\ge\tau 2^{-M}$. Hence we may
assume they are not symmetric in $G$. By Lemma \ref{lem:4.5}, there exists a
vertex $v_{1}\notin B_{0}\cup(V_{1}\cup W)$ that is non-adjacent to at least
$\frac{1}{2m}\tau_{0}$ vertices among $u_{1},\dots,u_{\tau_{0}}$. Without loss of
generality, let $u_{1},\dots,u_{\tau_{1}}$ be the vertices non-adjacent to
$v_{1}$, where $\tau_{1}\ge\frac{1}{2m}\tau_{0}\ge\frac{1}{2m}2^{-M}\tau$.

Define $B_{1}$ as the set of vertices in $V(G)\backslash(V_{1}\cup W)$ adjacent
to none of $u_{1},\dots,u_{\tau_{1}}$. Then $v_{1}\in B_{1}$, so $|B_{1}|\ge1$.
If $u_{1},\dots,u_{\tau_{1}}$ are symmetric in $G$, the lemma again follows.
Otherwise, Lemma \ref{lem:4.5} yields a vertex
$v_{2}\notin B_{1}\cup(V_{1}\cup W)$ non-adjacent to at least
$\frac{1}{2m}\tau_{1}$ vertices among $u_{1},\dots,u_{\tau_{1}}$. Let
$u_{1},\dots,u_{\tau_{2}}$ be those vertices, where
$\tau_{2}\ge\frac{1}{2m}\tau_{1}\ge\bigl(\frac{1}{2m}\bigr)^{2}2^{-M}\tau$.

Iterating this procedure, we obtain an increasing chain
$B_{0}\subsetneq B_{1}\subsetneq\cdots\subsetneq B_{\ell}$ and  $\tau_{\ell}$ vertices
$u_{1},\dots,u_{\tau_{\ell}}$ that are symmetric in
$G[B_{\ell}\cup(V_{1}\cup W)]$, satisfying
$\tau_{\ell}\ge\bigl(\tfrac{1}{2m}\bigr)^{\ell}2^{-M}\tau$.
Moreover, no vertex of $B_\ell$ is adjacent to any of
$u_1,\ldots,u_{\tau_\ell}$. Since
\[
B_\ell\subseteq V(G)\backslash(V_1\cup W)
=\cup_{j\in[r]\backslash\{1\}}\overline V_j,
\]
and every vertex of $B_\ell$ is non-adjacent to $u_1$, we have
\[
\ell\le |B_\ell|\le |\beta(u_1)|\le 512(M+2t).
\]
Let $C=512(M+2t)$. If the process does not terminate at step $C$, then
Lemma 3.6 would produce $B_{C+1}$, giving
\[
C+1\le |B_{C+1}|\le |\beta(u_1)|\le C,
\]
a contradiction. Therefore  the process terminates at some $\ell\le C$,
 and at that point $u_1,\dots,u_{\tau_\ell}$ are pairwise  symmetric in $G$.
 Consequently,
\[
\tau_{\ell}
\ge\bigl(\tfrac{1}{2m}\bigr)^{\ell}2^{-M}\tau
\ge\bigl(\tfrac{1}{2m}\bigr)^{512(M+2t)}2^{-M}\tau,
\]
which completes the proof.
\end{proof}

We now prove Lemma \ref{pro-lem}.

\begin{proof}[{\bf Proof of Lemma \ref{pro-lem}}]

In the proof of Lemma~\ref{lem:4.4}, we have shown that
$\mu(G[\overline{V}_{i}])\le t-1$ for all $i\in[r]$.
For each $i\in[r]$, let $R_{i}$ denote the set of end vertices of a maximum
matching in $G[\overline{V}_{i}]$, and define
$\overline{V}_{i}'=\overline{V}_{i}\backslash R_{i}$.
Then $|R_{i}|\le 2(t-1)<2t$, and $\overline{V}_{i}'$ is an independent set.

Fix $1\le i\le r$.
Let $u_{1},u_{2},\ldots,u_{\tau}$ be symmetric vertices in $G$ with $\tau$
maximal, where $u_{j}\in\overline{V}_{i}'$ for all $1\le j\le\tau$.
We first show that $\tau\ge m+1$.
For sufficiently large $n$, we have
$2^{-2t}|\overline{V}_{i}'|\ge (2m)^{512(M+2t)}2^{M}(m+1)$.
Thus, at least $(2m)^{512(M+2t)}2^{M}(m+1)$ vertices in $\overline{V}_{i}'$
are adjacent to $R_{i}$ in the same way; consequently, these vertices are
symmetric in $G[\overline{V}_{i}]$.
By Lemma~\ref{lem:4.6}, at least $m+1$ of them are symmetric in $G$.
Hence $\tau\ge m+1$.

We next show that
\begin{equation}\label{ast}
|\overline{V}_{i}'\backslash\{u_{1},\dots,u_{\tau}\}|
\le (2m)^{512(M+2t)}2^{M+2t}(m+1)-1.
\end{equation}
Otherwise,
 $\overline{V}_{i}'\backslash\{u_{1},\dots,u_{\tau}\}$ contains at least
$(2m)^{512(M+2t)}2^{M+2t}(m+1)$ vertices.
Since $|R_{i}|<2t$, at least
$(2m)^{512(M+2t)}2^{M}(m+1)$ of them are adjacent to $R_{i}$ in the same way.
As $\overline{V}_{i}'$ is an independent set, these vertices are symmetric in
$G[\overline{V}_{i}]$.
By Lemma~\ref{lem:4.6}, there exist $m+1$ vertices among them,
say $v_{1},\dots,v_{m+1}$, that are symmetric in $G$.
By the maximality of $\tau$, the vertices $u_{1}$ and $v_{1}$ are not symmetric
in $G$, i.e., $N_{G}(u_{1})\neq N_{G}(v_{1})$.
Without loss of generality, assume
$$\sum_{w\in N_{G}(u_{1})}(x_{w}+x_{u_{1}})^{2}-q(G) x_{u_{1}}^{2}
\ge
\sum_{w\in N_{G}(v_{1})}(x_{w}+x_{v_{1}})^{2}-q(G) x_{v_{1}}^{2}.$$

Let $G'$ be the graph obtained from $G$ by deleting all edges incident with
$v_{1}$ and adding all edges between $v_{1}$ and $N_{G}(u_{1})$.
We claim that $G'$ is $F$-free. Suppose otherwise, and let $H$ be a
copy of $F$ in $G'$. If $v_1\notin V(H)$, then $H\subseteq G$, a
contradiction. Hence $v_1\in V(H)$. Since $\tau\ge m+1$, there is
some $u_k\in\{u_1,\ldots,u_\tau\}\backslash V(H)$. The vertices $u_k$
and $v_1$ are symmetric in $G'$, so replacing $v_1$ by $u_k$ in $H$
yields a copy of $F$ whose edges all belong to $G$, again a
contradiction. Hence $G'$ is $F$-free,  but Lemma~\ref{lem:2.2} gives $q(G')>q(G)$, contradicting the extremal choice of $G$.
Hence \eqref{ast} holds.

Set $\overline{V}_{i}''=\{u_{1},\dots,u_{\tau}\}$.
Since $|V_{i}\backslash\overline{V}_{i}'|\le M+2t$, we obtain
\begin{align}\label{m1}
|V_{i}\backslash\overline{V}_{i}''|
\le (2m)^{512(M+2t)}2^{M+2t}(m+1)+M+2t
=:M_{1}.
\end{align}

\begin{myCli}\label{1}
For any $i\in[r]$ and  $u\in\overline{V}_{i}''$,
$N_{G}(u)=V(G)\backslash\overline{V}_{i}$.
\end{myCli}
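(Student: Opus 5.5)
The plan is to prove Claim~\ref{1} in two halves: $N_G(u)\cap\overline V_i=\emptyset$, and $u$ is adjacent to every vertex of $V(G)\backslash\overline V_i$. Write $U=\overline V_i''=\{u_1,\dots,u_\tau\}$ and $S=N_G(u_1)$. All vertices of $U$ have the common neighbourhood $S$, $U$ is independent, and $\tau\ge m+1$. Both halves will use the following reduction. Let $F^-$ be $F$ with one part of size $s$ of $T_{s(r-1),r-1}$ removed, that is, $F^-=(tK_2\cup\overline K_{s-2t})\vee T_{s(r-2),r-2}$. Then $F\subseteq G$ as soon as $G[S]\supseteq F^-$, since $U$ supplies the missing independent part. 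Hence $G[S]$ is $F^-$-free.

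\emph{Step 1: no neighbours inside $\overline V_i$.} Since $U\subseteq\overline V_i'$ and $\overline V_i'$ is independent, any neighbour of $u_1$ in $\overline V_i$ lies in $R_i$. Suppose $w\in R_i\cap S$. By (\ref{ast}) and (\ref{m1}), the class $U$ contains all but $M_1$ vertices of $V_i$. I would then compare the vertex $w$ with a vertex $u_k\in U$ via Lemma~\ref{lem:2.2}. If $w$ satisfies (\ref{xz}) against $u_k$, I would instead symmetrize $w$ onto $N_G(u_k)$. The resulting graph is $F$-free: $w$ becomes a twin of the $\ge m+1$ vertices of $U$, so any copy of $F$ through $w$ can be rerouted through an unused $u_j$, exactly as in the proof of (\ref{ast}). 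This contradicts extremality. Otherwise the inequality goes the other way. In that case I would use the edge $wu_k$, together with Lemma~\ref{lem:4.2} and the large common neighbourhoods in the other $\overline V_j$, to build an induced copy of $F$ in $G$; here the $t$ matching edges in part $i$ would be formed by $w$ and a matching of $G[\overline V_i]$. I expect this to be fiddly but routine.

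\emph{Step 2: full adjacency outside $\overline V_i$.} Suppose $u_1z\notin E(G)$ for some $z\in V(G)\backslash\overline V_i$; by symmetry no vertex of $U$ is adjacent to $z$. Let $G'$ be obtained by adding all edges $zu_j$ with $1\le j\le\tau$. By positivity of $\mathbf x$, we get $q(G')\ge q(G)+\sum_j(x_z+x_{u_j})^2>q(G)$. Hence $G'$ contains a copy $H$ of $F$, and $H$ uses $z$ together with some $u_j$. Because $\tau\ge m+1$ and $H$ meets $U$ in at most $m$ vertices, all vertices of $U\cap V(H)$ play the role of a single independent class joined to $z$. The goal is to find a replacement $z'\notin V(H)$ in $G$ that is adjacent to all of $N_H(z)\cup(U\cap V(H))$.

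If $z\in\overline V_j$ with $j\ne i$, Lemma~\ref{lem:4.2} gives at least $n/(2r)$ vertices of $\overline V_j$ adjacent to all $H$-neighbours of $z$ lying in $\bigcup_{l\ne j}\overline V_l$. Lemma~\ref{lem:4.4} ($|\beta(u)|\le 512(M+2t)$) shows that all but $O(1)$ vertices of $\overline V_j$ lie in $S$. Neighbours of $z$ in $W$ would be handled by intersecting with $N(W\cap V(H))$; the vertices of $W$ have linear degree into every $\overline V_l$ by (\ref{lower bound of W}). I would then pick $z'$ there and obtain $F\subseteq G$, a contradiction.

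The main obstacle is $z\in W$, and in particular $z\in W_i$, because the degree bounds for $W$ are only about $n/(2r)$. A replacement vertex with the same adjacency to $H$ need not exist then. My first attempt would again be symmetrization: compare $z$ with a vertex $z'\in\overline V_j''$ for some $j\ne i$, or use the maximality of the cut partition to place $z$ in a suitable part. If that fails, I would use the eigenvector bounds $x_{\min}^2\ge(1-\varepsilon)/n$ and Lemma~\ref{lem:4.3}, together with $\delta(G)>(\frac{r-1}{r}-\varepsilon)n$, to show that $z$ is adjacent to almost all of $\bigcup_{l}\overline V_l$. That would let the common-neighbourhood argument of Lemma~\ref{lem:4.2} go through for $z$ as well.
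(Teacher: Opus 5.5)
Your proposal has a genuine gap. It never uses the one fact that makes the claim immediate. You observe that every vertex is adjacent to all or none of $U=\overline V_i''$, but you do not combine this with (\ref{m1}), i.e.\ $|V_i\setminus U|\le M_1=O(1)$. Together they reduce the claim to counting:
\begin{itemize}
\item any $z$ with $d_{V_i}(z)>M_1$ has a neighbour in $U$, hence is joined to all of $U$;
\item any $w$ with $d_{V_i}(w)<|U|$, where $|U|\ge(\frac1r-\varepsilon)n-M_1$, misses some vertex of $U$, hence is joined to none of it.
\end{itemize}
Now apply this to each type of vertex:
\begin{itemize}
\item every $z\in V_j$ with $j\ne i$, whether in $W_j$ or in $\overline V_j$, has $d_{\overline V_i}(z)\ge\frac{n}{3r}>M_1$ by (\ref{lower bound of W}) and (\ref{lower bound of V});
\item every $z\in W_i$ has $d_{V_i}(z)\ge\varepsilon n>M_1$ \emph{by the very definition of $W_i$};
\item every $w\in\overline V_i$ has $d_{V_i}(w)<\varepsilon n<|U|$.
\end{itemize}
That is the paper's entire proof. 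In particular, the case $z\in W_i$ that you flag as the main obstacle is trivial: what is needed is large degree into $V_i$, not into the other parts. No new extremality, eigenvector or $F$-freeness argument is needed at this stage.

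The route you propose instead does not close.
\begin{itemize}
\item \emph{Step~1.} Since $w\in S=N_G(u_k)$, you have $wu_k\in E(G)$, so Lemma~\ref{lem:2.2} does not apply: it requires $uv\notin E(G)$, and ``symmetrizing $w$ onto $N_G(u_k)$'' would join $w$ to itself.
\item \emph{Your fallback in Step~1.} This branch has no basis. The direction of (\ref{xz}) carries no structural information. Moreover $\mu(G[\overline V_i])\le t-1$ and $wu_k$ is already an edge of $G[\overline V_i]$, so $w$ together with a matching of $G[\overline V_i]$ can never supply $t$ matching edges in part $i$.
\item \emph{Step~2 with $z\in\overline V_j$.} The replacement $z'$ must be adjacent to every $H$-neighbour of $z$. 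These neighbours may include vertices of $\overline V_j$ itself (e.g.\ the other end of a matching edge of $H$ at $z$) or several vertices of $W$. Lemma~\ref{lem:4.2} covers neither case, and several $W$-neighbourhoods of size about $\frac{n}{2r}$ inside $\overline V_j$ can have empty intersection.
\item \emph{Step~2 with $z\in W$.} This case is left open.
\end{itemize}
All of this machinery becomes unnecessary once you use the counting argument above.
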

\begin{proof}
For each $j\in[r]$, the set $\overline{V}_{j}''$ consists of vertices that are
pairwise symmetric in $G$, and
$|V_{j}\backslash\overline{V}_{j}''|\le M_{1}$.
Fix $1\le i\le r$.
For any $v\in V(G)\backslash\overline{V}_{i}''$, either $v$ is adjacent to all
vertices of $\overline{V}_{i}''$ or to none of them.

By (\ref{lower bound of W}) and (\ref{lower bound of V}),
$d_{\overline{V}_{i}}(v)\ge\frac{n}{2r}-\frac{r\varepsilon n}{2}\ge\frac{n}{3r}$
for every $v\in V_{j}$ with $j\neq i$.
Hence every vertex of $V_{j}$ ($j\neq i$) is adjacent to all vertices of
$\overline{V}_{i}''$.
Moreover, $d_{V_{i}}(v)<\varepsilon n$ for $v\in\overline{V}_{i}$, so such
vertices are non-adjacent to $\overline{V}_{i}''$, whereas
$d_{V_{i}}(v)\ge\varepsilon n$ for $v\in W$, so each  vertex of $W$ is adjacent to
all vertices of $\overline{V}_{i}''$.
Consequently,
$N_{G}(u)=V(G)\backslash\overline{V}_{i}$ for every $u\in\overline{V}_{i}''$.
\end{proof}

\begin{myCli}\label{2}
$|W|=t-1$.
\end{myCli}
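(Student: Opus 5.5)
We will prove $|W|\ge t-1$ and $|W|\le t-1$ separately, using Claim~\ref{1} as the main structural input. By Claim~\ref{1}, every vertex of $W$ is adjacent to every vertex of $\overline{V}_j''$ for all $j\in[r]$, including the part containing it. Since $|V_j\backslash \overline{V}_j''|\le M_1$ is bounded, each $\overline{V}_j''$ has size $n/r-O(1)$.

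\emph{Upper bound $|W|\le t-1$.} Suppose $|W|\ge t$ and pick $w_1,\dots,w_t\in W$. For each $w_k$, say $w_k\in W_{i_k}$, we have $d_{V_{i_k}}(w_k)\ge \varepsilon n$. Because $|V_{i_k}\backslash \overline{V}_{i_k}''|\le M_1$, $w_k$ has at least $\varepsilon n-M_1$ neighbours in $\overline{V}_{i_k}''$; by Claim~\ref{1} it is in fact joined to all of $\overline{V}_{i_k}''$ and to all of $\overline{V}_j''$ for $j\ne i_k$. We then build $F$ as follows.
\begin{itemize}
\item Choose distinct vertices $a_1,\dots,a_t\in \overline{V}_1''$ and pair each $a_k$ with $w_k$, giving $t$ disjoint edges $w_ka_k$. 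This works whatever parts the $w_k$ lie in, since each $w_k$ is adjacent to all of $\overline{V}_1''$.
\item Add $s-2t$ further vertices of $\overline{V}_1''$ to play the role of $\overline{K}_{s-2t}$.
\item Take $s$ vertices from each of $\overline{V}_2'',\dots,\overline{V}_r''$. By Claim~\ref{1} these are completely joined to each other, to $\overline{V}_1''$ and to $W$, so they form $T_{s(r-1),r-1}$ joined to the previous $s$ vertices.
\end{itemize}
The side $tK_2\cup\overline{K}_{s-2t}$ only needs the $t$ edges $w_ka_k$, and extra edges are harmless since we want a subgraph. This gives $F\subseteq G$, a contradiction. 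The one point to check is that the $w_k$ are distinct from the chosen $\overline{V}''$-vertices, which is automatic since $W\cap\overline{V}_j''=\emptyset$.

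\emph{Lower bound $|W|\ge t-1$.} This is the main obstacle, since it needs a spectral comparison rather than a purely combinatorial one. First, $G$ contains $K_{|W|}\vee K_r(|\overline{V}_1''|,\dots,|\overline{V}_r''|)$ roughly, so the other vertices are essentially forced. We will refine the structure of $\overline{V}_i\backslash\overline{V}_i''$ in two steps.
\begin{itemize}
\item Each vertex $u\in\overline{V}_i\backslash\overline{V}_i''$ has at most $M+2t$ edges in $\alpha(u)$ and a bounded $\beta(u)$ by Lemma~\ref{lem:4.4}.
\item Comparing with a vertex of $\overline{V}_i''$ via Lemma~\ref{lem:2.2}, we expect to show $\alpha(u)\cap E(G[\overline{V}_i])$ is empty and $u$ is symmetric to $\overline{V}_i''$, unless $u$ contributes to a matching in $\overline{V}_i$. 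The resulting graph is $F$-free by the substitution argument.
\end{itemize}
Hence $G\subseteq K_{|W|}\vee H$, where $H$ is $r$-partite apart from at most $t-1-|W|$ matching edges inside parts. Indeed, the total matching across parts together with $W$ must stay below $t$, by the same embedding argument as in the upper bound.

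If $|W|\le t-2$, we compare $q(G)$ with $q(K_{t-1}\vee T_{n-t+1,r})$ using Lemma~\ref{Ktv} and Lemma~\ref{t-color}. The $Q$-index of $G$ should be $\alpha n+|W|\beta+O(1/n)$ plus a contribution of only $O(1/n)$ from the boundedly many extra matching edges. The reason is that $x_v^2=O(1/n)$ by Lemma~\ref{lem:4.3}, so each edge changes $\mathbf{x}^TQ\mathbf{x}$ by $O(1/n)$. By contrast, a vertex of $W$ adds about $\beta=\Theta(1)$. To make this rigorous we would compute $q(K_{a}\vee K_r(n_1,\dots,n_r))$ via the equitable-partition quotient and bound the perturbation from the $O(1)$ extra edges. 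This gives $q(G)<q(K_{t-1}\vee T_{n-t+1,r})$, contradicting extremality, so $|W|=t-1$.
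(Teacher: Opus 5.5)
Your upper bound $|W|\le t-1$ is the paper's argument, written out in more detail.

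The lower bound has a gap as proposed. The structural refinement is only announced (``we expect to show''), and the one concrete justification you give for it fails. The embedding argument of the upper bound shows $|W|+\mu(G[\overline V_i])\le t-1$ for each \emph{single} $i$, but it says nothing about matching edges lying in different parts. Suppose $ab\in E(G[\overline V_1])$ and $cd\in E(G[\overline V_2])$. Then all $r-1$ classes of $T_{s(r-1),r-1}$ must be complete to $\{a,b,c,d\}$. But by Claim~\ref{1} each $\overline V_j''$ is anticomplete to $\overline V_j$, so only the $r-2$ sets $\overline V_3'',\dots,\overline V_r''$ are usable. This is not just a weakness of the argument. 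For $t=2$, $K_r(N,\dots,N)$ with one edge added inside each of two different parts is $F$-free: every $K_{r+1}$ in it contains one of the added edges. This forces the two edges of $2K_2$ onto $ab$ and $cd$, leaving only an $(r-2)$-partite common neighbourhood for $T_{s(r-1),r-1}$.

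The symmetrization step is also unavailable as stated. Lemma~\ref{lem:2.2} for a single $u\in\overline V_i\setminus\overline V_i''$ versus $u_1\in\overline V_i''$ helps only when the inequality favours $u_1$. If it favours $u$, making $u_1$ a clone of $u$ need not preserve $F$-freeness, since the replacement argument needs $m+1$ clones; this is why the paper only bounds the exceptional set by $M_1$. Finally, the spectral comparison itself is left at ``should be''.

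The refinement is unnecessary, though, and your comparison closes directly. Let $G_0:=K_{|W|}\vee K_r(|\overline V_1|,\dots,|\overline V_r|)$. Every edge of $G$ outside $G_0$ lies inside some $\overline V_i$. Since $\mu(G[\overline V_i])\le t-1$ and $|\alpha(u)|\le M+2t$ (proof of Lemma~\ref{lem:4.4}), we get $\sum_i e(G[\overline V_i])\le r(2t-2)(M+2t)$. With $x_{\max}\le 16/\sqrt n$ (Lemma~\ref{lem:4.3}),
\[
q(G)=\sum_{uv\in E(G)}(x_u+x_v)^2\le q(G_0)+\frac{1024\, r(2t-2)(M+2t)}{n}\le \alpha n+|W|\beta+O(1/n),
\]
by Lemmas~\ref{t-color} and~\ref{Ktv} with $|W|+1$ in place of $t$. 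Since $\beta>0$, this contradicts $q(G)\ge q(K_{t-1}\vee T_{n-t+1,r})=\alpha n+(t-1)\beta+O(1/n)$ whenever $|W|\le t-2$.

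This repaired route genuinely differs from the paper's, which argues locally. The paper deletes the $O(1)$ edges inside the sets $V_j\setminus\overline V_j''$ and joins one $v_0\in\overline V_i''$ to the rest of $\overline V_i''$. The resulting graph lies in the join of $K_{t-1}$ with an $r$-partite graph, hence is $F$-free, and its Rayleigh quotient at $\mathbf x$ exceeds $q(G)$ by more than $3/r$. The paper's route uses Claim~\ref{1} and the lower bound on $x_{\min}$, but no asymptotics for $q$ of the join graphs. Yours needs only $x_{\max}$, the $O(1)$ edge count, and Lemmas~\ref{t-color} and~\ref{Ktv} for every clique size up to $t-1$.
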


\begin{proof}
By Claim~\ref{1}, $\cup_{i\in[r]}\overline{V}''_{i}$  induces a complete $r$-partite graph, and
every vertex of $W$ is adjacent to all vertices of
$\cup_{i\in[r]}\overline{V}''_{i}$ .
Thus $|W|\le t-1$.

If $t=1$, then $|W|=0$, and the claim holds.
Assume $t\ge2$ and $|W|\le t-2$.
Pick $v_{0}\in\overline{V}_{i}''$, and let $G'$ be the graph obtained from $G$ by
deleting all edges in $\cup_{i\in[r]}E(G[V_{i}\backslash\overline{V}''_{i}])$
and adding all edges between $v_{0}$ and $\overline{V}_{i}''\backslash\{v_{0}\}$.
Then $G'$ is a subgraph of $K_{t-1}\vee T_{rn,r}$, so $G'$ is $F$-free.

On the other hand, by~\eqref{m1}, Lemma~\ref{lem:4.3} and the bound
$x_{\min}^{2}\ge\frac{1}{n}\bigl(1-\frac{1}{\log\log n}\bigr)$, we have
\begin{displaymath}
\begin{split}
q(G')-q(G)
&\geq \sum_{u\in \overline{V}_{i}''\backslash \{v_{0}\}}(x_{u}+x_{v_{0}})^{2}
-\sum_{uv\in \cup_{i\in[r]}E(G[V_{i}\backslash\overline{V}''_{i}])}(x_{u}+x_{v})^2\\
&\geq\frac{4}{n}\Big(1-\frac{1}{\log\log n}\Big)\Big(\frac{n}{r}-2\varepsilon n\Big)-\frac{1024rM_{1}^{2}}{n}\\
&>\frac{3}{r},
\end{split}
\end{displaymath}
contradicting the maximality of $q(G)$.
Hence $|W|=t-1$.
\end{proof}

By Claims~\ref{1} and \ref{2}, each $\overline{V}_{i}$ is an independent set.
Therefore there exist integers $n_{1},\dots,n_{r}$ with
$\sum_{i=1}^{r}n_{i}=n-t+1$ such that $G$ is a spanning subgraph of
$K_{t-1}\vee K_{r}(n_{1},\dots,n_{r})$.
Finally, Lemma~\ref{t-color} and $x_{\min}^2 \ge \frac{1}{n}\bigl(1 - \frac{1}{\log\log n}\bigr)$ imply
$G\cong K_{t-1}\vee T_{n-t+1,r}$,
completing the proof.
\end{proof}

\section{Proof of Theorem \ref{thm:main1}}

 Claims $1$ and $2$ in the proof of \cite[Theorem $2.3$]{ZLS2025} yield the following result.

\begin{lem}[\cite{ZLS2025}]\label{min-comp}
Let  $F$ be a graph with  $\chi(F) = r+1 \ge 4$.  There exist infinitely many positive integers $n$ such that, for every
$G\in\mathrm{EX}_q(n,F)$ and every nonnegative unit eigenvector
$\mathbf{x}=(x_1,\ldots,x_n)$ corresponding to $q(G)$,
$$x_{\textup{min}}^{2}\geq\frac{1}{n}\Big(1-\frac{1}{\log\log n}\Big).$$
\end{lem}

\begin{lem}[\cite{ZLF2026}]\label{sdf}
Let $G$ be a graph of order $n$. If $\mathbf{x}=(x_{1},\ldots,x_{n})$ is a nonnegative unit  eigenvector of $q(G)$ and
 $u$ is a vertex for which $x_{u}=x_{\textup{min}}$, then
$$\frac{q(G-u)}{n-2}\geq \frac{q(G)}{n-1}\bigg(1+\frac{1-nx_u^{2}}{(n-2)(1-x_u^{2})}\bigg)-\frac{1-nx_u^{2}}{(n-2)(1-x_u^{2})}.$$
\end{lem}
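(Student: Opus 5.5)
The natural approach is a Rayleigh-quotient argument on $G-u$, using the restriction of $\mathbf{x}$ and exploiting that $u$ has the smallest Perron entry. Write $q=q(G)$ and $x_u=x_{\min}$. Let $\mathbf{y}$ be the restriction of $\mathbf{x}$ to $V(G)\setminus\{u\}$, so that $\|\mathbf{y}\|^2=1-x_u^2$. We have
\[
\mathbf{y}^TQ(G-u)\mathbf{y}=\sum_{ij\in E(G-u)}(x_i+x_j)^2=q-\sum_{w\in N_G(u)}(x_w+x_u)^2 .
\]
The eigen-equation at $u$ gives $\sum_{w\in N(u)}x_w=(q-d_u)x_u$, where $d_u=d_G(u)$. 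Hence
\[
\sum_{w\in N(u)}(x_w+x_u)^2=\sum_{w\in N(u)}x_w^2+2(q-d_u)x_u^2+d_ux_u^2 .
\]
This leads to the bound $q(G-u)\ge \bigl(q-\sum_{w\in N(u)}x_w^2-(2q-d_u)x_u^2\bigr)/(1-x_u^2)$. The crux is then to bound $\sum_{w\in N(u)}x_w^2$ from above in a way that produces the factor $n-1$.

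Since $u$ is a minimum vertex, I would not bound the neighbourhood sum crudely by $1$. Instead I would use the identity $\sum_{v}\sum_{w\in N(v)}x_w^2=\sum_w d_wx_w^2$, together with $\sum_w d_wx_w^2\le \mathbf{x}^TD\mathbf{x}$ and $q=\mathbf{x}^TD\mathbf{x}+\mathbf{x}^TA\mathbf{x}$. A cleaner route is to average the deletion over all vertices: for every $v$,
\[
q(G-v)(1-x_v^2)\ge q-\sum_{w\in N(v)}(x_w+x_v)^2 .
\]
Summing over $v$ gives $\sum_v \sum_{w\in N(v)}(x_w+x_v)^2=2q$, since each edge is counted twice. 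The main obstacle is that this averaging controls an average over all $v$ rather than the specific minimal vertex $u$. To handle this, I would instead bound directly, writing $x_w^2\le$ the appropriate expression obtained via the rewriting $\sum_{w\in N(u)}x_w^2=\sum_{w\in N(u)}(x_w-x_u)^2+\dots$. If that fails, I would use the fact that the target inequality has the linear form $\frac{q(G-u)}{n-2}\ge \frac{q}{n-1}+\theta\bigl(\frac{q}{n-1}-1\bigr)$ with $\theta=\frac{1-nx_u^2}{(n-2)(1-x_u^2)}\ge0$. This suggests proving the equivalent inequality
\[
(1-x_u^2)q(G-u)\ge \frac{(n-2)(1-x_u^2)+(1-nx_u^2)}{n-1}\,q-(1-nx_u^2),
\]
that is, $\sum_{w\in N(u)}(x_w+x_u)^2\le \frac{q(1-x_u^2)+(n-1)(1-nx_u^2)}{n-1}-\dots$. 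I would verify this by computing with the variable $\sum_{w\in N(u)}x_w^2\le 1-x_u^2-(n-1-d_u)x_u^2$, which uses minimality for the non-neighbours.

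With that bound substituted, the estimate becomes a function of $d_u$ alone. It is affine in $d_u$, with coefficient of order $x_u^2-x_u^2$ plus the terms $(1-nx_u^2)$. I expect that checking its monotonicity in $d_u$, and then using $d_u\le n-1$ (or $d_u\le q/2$), will yield exactly the claimed expression. This final algebraic matching, in particular confirming the direction of monotonicity in $d_u$, is the step I expect to require care.
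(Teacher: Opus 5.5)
Since the paper imports this lemma from [ZLF2026] without proof, there is no route to compare against. Your ingredients already form a complete proof: the Rayleigh quotient of the restriction $\mathbf{y}$ of $\mathbf{x}$ to $V(G)\setminus\{u\}$, the eigen-equation at $u$, and the bound $\sum_{w\in N(u)}x_w^2\le 1-(n-d_u)x_u^2$ from minimality of $x_u$ on the $n-1-d_u$ non-neighbours. The detours (averaging over all $v$, rewriting via $(x_w-x_u)^2$) can be dropped.

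However, you stop before the final algebra and misdescribe what happens there. Substituting the minimality bound into your first estimate gives
\[
(1-x_u^2)\,q(G-u)\ \ge\ q-\bigl(1-(n-d_u)x_u^2\bigr)-(2q-d_u)x_u^2\ =\ (1-2x_u^2)\,q-(1-nx_u^2).
\]
The $d_u$-terms cancel identically. So there is no monotonicity in $d_u$ to check, and no need for $d_u\le n-1$ or $d_u\le q/2$. Since $(n-2)(1-x_u^2)+(1-nx_u^2)=(n-1)(1-2x_u^2)$, dividing by $(n-2)(1-x_u^2)$ yields exactly the stated inequality. This divisor is positive because $x_u^2\le 1/n$ and $n\ge 3$. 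Your displayed ``equivalent inequality'' is correct and is precisely this line.

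Your next reformulation, $\sum_{w\in N(u)}(x_w+x_u)^2\le \frac{q(1-x_u^2)+(n-1)(1-nx_u^2)}{n-1}-\dots$, is not right as written. The correct equivalent target is
\[
\sum_{w\in N(u)}(x_w+x_u)^2\ \le\ 2q\,x_u^2+1-nx_u^2 .
\]
This follows at once from your identity $\sum_{w\in N(u)}(x_w+x_u)^2=\sum_{w\in N(u)}x_w^2+(2q-d_u)x_u^2$ together with the minimality bound. Written as these three lines, the argument is complete, with no hedging needed.
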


\begin{proof}[{\bf Proof of Theorem \ref{thm:main1}}]
By Lemmas \ref{pro-lem} and \ref{min-comp}, there exists a sufficiently large $n_{0}$ such that
$$\mathrm{EX}_{q}(n_{0},F)=\{K_{t-1}\vee T_{n_{0}-t+1,r}\}.$$ Assume that
the result holds for $n-1\geq n_{0}$. We now prove that the result still holds for $n$. Let
$G\in \mathrm{EX}_{q}(n,F)$ and $\mathbf{x}=(x_{1},\ldots,x_{n})$ be a nonnegative unit eigenvector corresponding to $q(G)$. By Lemma \ref{pro-lem}, it suffices to show that $x_{\textup{min}}^{2}\geq\frac{1}{n}(1-\frac{1}{\log\log n})$.
 By Lemma \ref{sdf}, we have
$$\frac{\mathrm{ex}_q (n-1,F)}{n-2}\geq  \frac{\mathrm{ex}_q (n,F)}{n-1}\bigg(1
+\frac{1-nx_{\textup{min}}^{2}}{(n-2)(1-x_{\textup{min}}^{2})}\bigg)
-\frac{1-nx_{\textup{min}}^{2}}{(n-2)(1-x_{\textup{min}}^{2})}.$$
From $r\geq3$ and Lemma \ref{Ktv}, we obtain
$$\mathrm{ex}_q (n,F)\geq q(K_{t-1}\vee T_{n-t+1,r})=\alpha n+(t-1)\beta+O(\frac{1}{n})\geq \frac{5}{4}n,$$
where $\alpha=\frac{2(r-1)}{r}$ and $\beta=\frac{4(r-1)}{r(r-2)}$.
Consequently,
\begin{equation}\label{in2}
\frac{\mathrm{ex}_q (n-1,F)}{\mathrm{ex}_q (n,F)}\geq\frac{n-2}{n-1}\Big(1+\frac{1-nx_{\textup{min}}^{2}}{5(n-2)(1-x_{\textup{min}}^{2})}\Big)
\geq\frac{n-2}{n-1}\Big(1+\frac{1-nx_{\textup{min}}^{2}}{5n}\Big).
\end{equation}

By the induction hypothesis,
$$\mathrm{ex}_q(n-1,F)=q(K_{t-1}\vee T_{n-t,r}).$$
Moreover, since $K_{t-1}\vee T_{n-t+1,r}$ is $F$-free,
\begin{align}\label{in3}
\begin{split}
\frac{\mathrm{ex}_q (n-1,F)}{\mathrm{ex}_q (n,F)}
&=\frac{q(K_{t-1}\vee T_{n-t,r})}{\mathrm{ex}_q (n,F)}\\
&\leq \frac{q(K_{t-1}\vee T_{n-t,r})}{q(K_{t-1}\vee T_{n-t+1,r})}\\
&=\frac{\alpha (n-1)+(t-1)\beta+O(\frac{1}{n})}{\alpha n+(t-1)\beta+O(\frac{1}{n})}\\
&=1-\frac{\alpha}{\alpha n+(t-1)\beta+O(\frac{1}{n})}+O(\frac{1}{n^{2}})\\
&\leq1-\frac{1}{n+2t}+O(\frac{1}{n^{2}}).
\end{split}
\end{align}
Combining (\ref{in2}) and  (\ref{in3}) yields
$$1+\frac{1-nx_{\textup{min}}^{2}}{5n}\leq\Big(1+\frac{1}{n-2}\Big)\Big(1-\frac{1}{n+2t}+O(\frac{1}{n^{2}})\Big)
=1+O(\frac{1}{n^{2}}).$$
Thus
$$x_{\textup{min}}^{2}\geq\frac{1}{n}\Big(1-O(\frac{1}{n})\Big)\geq\frac{1}{n}\Big(1-\frac{1}{\log\log n}\Big).$$
By induction on the number of vertices, we complete the proof of Theorem \ref{thm:main1}.
\end{proof}

\subsection*{Acknowledgement}
 The authors acknowledge the use of AI assistance in discussions on the proof of Lemma~\ref{lem:2.2}. All mathematical arguments and proofs in the final manuscript were checked and written by the authors.

\end{document}